\newtheorem{theorem}{Theorem}
\newtheorem{lemma}{Lemma}
\begin{document}

\title[The biharmonic wave scattering problem]{Convergence of the PML method for the biharmonic wave scattering problem in periodic structures}


\author[1]{\fnm{Gang} \sur{Bao}}\email{baog@zju.edu.cn}
\equalcont{These authors contributed equally to this work}

\author*[2]{\fnm{Peijun} \sur{Li}}\email{lipeijun@math.purdue.edu}

\author[3]{\fnm{Xiaokai} \sur{Yuan}}\email{yuanxk@jlu.edu.cn}
\equalcont{These authors contributed equally to this work}

\affil[1]{\orgdiv{School of Mathematical Sciences}, \orgname{Zhejiang University}, \orgaddress{\city{Hangzhou}, 
\postcode{310027}, \state{Zhejiang}, \country{China}}}

\affil*[2]{\orgdiv{Department of Mathematics}, \orgname{Purdue University}, \orgaddress{ 
\city{West Lafayette}, \postcode{47907}, \state{Indiana}, \country{USA}}}

\affil[3]{\orgdiv{School of Mathematics}, \orgname{Jilin University}, \orgaddress{
\city{Changchun}, \postcode{130012}, \state{Jilin}, \country{China}}}


\abstract{This paper investigates the scattering of biharmonic waves by a one-dimensional periodic array of cavities embedded in an infinite elastic thin plate. The transparent boundary conditions are introduced to formulate the problem from an unbounded domain to a bounded one. The well-posedness of the associated variational problem is demonstrated utilizing the Fredholm alternative theorem. The perfectly matched layer (PML) method is employed to reformulate the original scattering problem, transforming it from an unbounded domain to a bounded one. The transparent boundary conditions for the PML problem are deduced, and the well-posedness of its variational problem is established. Moreover, exponential convergence is achieved between the solution of the PML problem and that of the original scattering problem.}

\keywords{Biharmonic wave equation, transparent boundary condition, perfectly matched layer, variational problem, well-posedness, convergence analysis.}


\pacs[2010]{78A45, 65N30}

\maketitle

\section{Introduction}

Scattering of flexural waves in an elastic thin plate, modeled by fourth-order biharmonic wave equations, holds broad engineering applications. These applications span diverse fields, including the design of ultra-broadband elastic cloaking \cite{DZAL-PRL-2018, FGE-PRL-2009, FGE-JCP-2011}, platonic crystals \cite{S-UA-2013, H-UL-2014, HCMM-RRSA-2016}, and the exploration of acoustic black hole concepts \cite{PGCS-JSV-2020}. Consequently, ongoing research in theoretical analysis, numerical simulations, and industrial manufacturing continues to draw considerable attention from both engineering and mathematical communities.

Most works in the literature focus on the static problem in a bounded domain, which is formulated by the bi-Laplacian equation. When addressing the fourth-order problem using the finite element method, standard $H^2$-conforming methods necessitate $C^1$-continuous piecewise polynomials on the mesh, a challenge in practical implementation. Alternatively, various nonconforming and discontinuous finite element methods have emerged, such as the weak Galerkin finite element methods supplemented with stabilizers \cite{MWZ-JSC-2014, YZ-SIAM-2020, ZZ-JSC-2015}; the virtual element method, which requires no global $C^0$ regularity for the numerical solution \cite{AMV-MMMAS-2018, ZCZ-MMMAS-2016}; and the mixed element method, effectively reducing the fourth-order problem to coupled second-order problems \cite{AD-NM-2001, C-FEM-1978, CR-1974, CG-CMAME-1975}. These methods have undergone comprehensive analysis. 

When compared with the results concerning the bi-Laplacian equation, the findings are relatively limited for the biharmonic wave scattering problems in unbounded domains. In \cite{DL-2023-arXiv}, the initial theoretical analysis of the boundary integral equation method was provided for solving the biharmonic wave equation. Through the introduction of two auxiliary variables, the biharmonic wave equation was split into the Helmholtz and modified Helmholtz equations. Subsequently, the Holmholtz and modified Helmholtz wave components were represented using the double- and single-layer potentials. The well-posedness of the coupled boundary integral system was established by applying the Riesz--Fredholm theory. If the exterior problem is approached using the variational approach with transparent boundary conditions (TBCs), the studies concerning waveguide and obstacle scattering problems were presented in \cite{BCF-CMS-2019, BH-SIAM-2020} under various boundary conditions, including clamped, simply supported, roller-supported, or free plate boundary conditions. Numerically, a mixed element method was proposed in \cite{YLYZ-RAM-2023, YL-JCP-2023} by introducing two auxiliary variables and  decomposing the biharmonic problem into the Helmholtz and modified Helmholtz equations. Subsequently, TBCs were introduced for each equation. Particularly, the linear finite element method, incorporating interior penalty and boundary penalty, was proposed in \cite{YL-JCP-2023} to effectively reduce the oscillation of the bending moment.

The method of perfectly matched layer (PML) is a widely utilized domain truncation technique. In contrast to the nonlocal TBC method, the PML method generates a local boundary condition on the outer surface of the layer by integrating an artificial absorbing region around the domain of interest. The ease of handling the local boundary condition has contributed to the widespread adoption of this method ever since its inception by B\'erenger \cite{B-JCP-1994} for solving the time-dependent Maxwell equations. It has found extensive applications in solving various wave scattering problems, including, for example, acoustic waves \cite{CL-SINUM-2005}, electromagnetic waves \cite{BW-SINUM, BP-MC-2007, LWZ-SIMA-2011}, and elastic waves \cite{CXZ-MC-2016, BPT-MC-2010}. The PML method has also been utilized numerically in solving biharmonic wave scattering problems \cite{SM-JSV-2011, MB-IJSS-2018, MB-IJSS-2016}, highlighting its convenience and accuracy. However, to our knowledge, a comprehensive discussion regarding the well-posedness of the PML method and its convergence has not been documented in existing literature. This paper aims to address these gaps.

In this paper, we investigate the scattering of flexural waves resulting from a plane incident wave interacting with a one-dimensional periodic array of cavities within an infinite elastic thin plate. The wave propagation is described by the fourth-order biharmonic wave equation. Because of the periodic characteristics of both the incident wave and the cavities, the solution complies with quasi-periodic conditions, allowing us to formulate the problem within a single periodic cell. The TBCs are derived by incorporating the bounded outgoing wave condition, utilizing the Fourier series expansion of the solution in regions distant from the cavities. With the aid of the TBCs, the scattering problem is equivalently transformed from an unbounded domain to a bounded one. The corresponding variational problem is shown to satisfy G\r{a}rding's inequality, and its well-posedness is established through the utilization of the Fredholm alternative theorem. To replace the nonlocal TBCs, the PML method is adopted through the complex coordinate stretching scheme \cite{TC-IEEE-1997}. Alternatively, the unbounded domain is truncated by imposing homogeneous boundary conditions on the wave field and its normal derivative at the outer boundary of the PML region. Upon studying the Fourier series expansion of the solution to the PML problem, we deduce equivalent TBCs to reformulate the PML problem in the domain where the original scattering problem, along with the TBC, is imposed. The well-posedness of the PML problem is confirmed through an examination of its variational formulation. Additionally, the PML solution demonstrates exponential convergence concerning the thickness of the PML regions towards the solution of the original scattering problem. For a comprehensive account of related electromagnetic wave scattering problems in periodic structures, we reference \cite{BL-2022}.

The paper is outlined as follows. Section \ref{Section2} introduces the model equations. The TBCs are derived in Section \ref{Section3}. Section \ref{Section4} details the reduction of the scattering problem to a bounded domain using the TBCs, along with the discussion on the well-posedness of the variational problem. Section \ref{Section5} addresses the PML problem, including investigations into its well-posedness and convergence. Finally, the paper concludes with general remarks in Section \ref{Section6}.

\section{Problem formulation}\label{Section2}

Let us examine the scattering phenomenon of an incident wave interacting with a one-dimensional periodic array of cavities in an infinitely extending elastic thin plate, which is characterized by the Kirchhoff--Love model and is depicted in Figure \ref{pg}. Assume that the alignment of the cavities coincides with the $x_1$-axis, exhibiting a periodicity of $\Lambda$. Consider an incident field represented as a time-harmonic plane wave given by
\[
u^i(x)=e^{{\rm i}(\alpha x_1-\beta x_2)}, \quad x\in\mathbb R^2,
\]
where $\alpha = \kappa\sin\theta, \beta = \kappa\cos\theta$ with $\kappa > 0$ and $\theta \in \left(-\frac{\pi}{2}, \frac{\pi}{2}\right)$ denoting the wavenumber and the incident angle, respectively. It can be verified that the incident field $u^i$ satisfies the biharmonic wave equation 
\[
 \Delta^2 u^i - \kappa^4 u^i=0\quad\text{in} ~ \mathbb R^2. 
\]

Due to the periodic nature of the geometry, the problem can be confined to a single periodic cell. Denote by $\Omega_c$ the cavity with a Lipschitz continuous boundary $\Gamma_c$. Let $R$ be a rectangular domain that is sufficiently large to enclose the region $\Omega_c$. Without loss of generality, let $R$ be defined as $R=\left\{x\in\mathbb{R}^2: 0<x_1<\Lambda, h_2<x_2<h_1\right\}$, where $h_k, k=1, 2$ are constants. Additionally, define $\Gamma_k=\left\{x\in\mathbb{R}^2: 0<x_1<\Lambda, x_2=h_k\right\}$ for $k=1, 2$, $\Gamma_l=\left\{x\in\mathbb{R}^2: x_1=0, h_2<x_2<h_1\right\}$, and $\Gamma_r=\left\{x\in\mathbb{R}^2: x_1=\Lambda, h_2<x_2<h_1\right\}$. Let $\Omega=R\setminus\overline{\Omega_c}$. Define $\Omega_1$ and $\Omega_2$ as the regions above and below $\Gamma_1$ and $\Gamma_2$, respectively.

\begin{figure}
\centering
\includegraphics[width=0.6\textwidth]{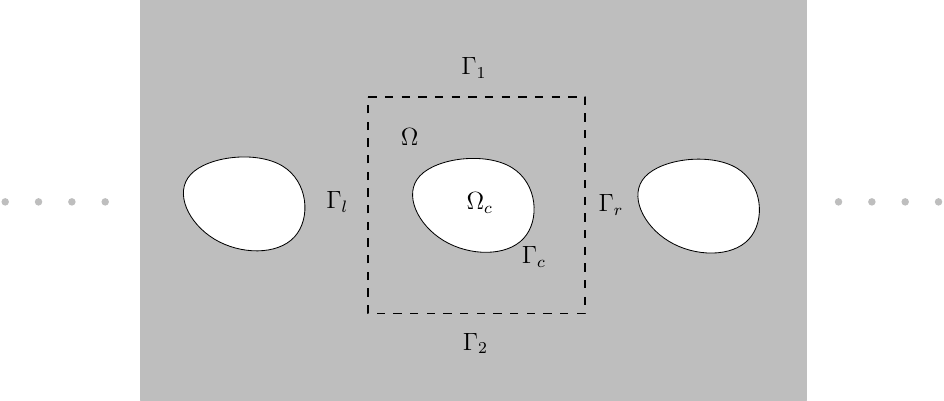}
\caption{Schematic of the problem geometry.}
\label{pg}
\end{figure}

The out-of-plane displacement of the plate, denoted as $u$ and referred to as the total field, also satisfies the biharmonic wave equation
\begin{equation}\label{TotalBiharmonic}
 \Delta^2 u-\kappa^4 u=0\quad {\rm in} ~ \Omega. 
\end{equation}
The total field is assumed to satisfy the Dirichlet boundary condition, known as the clamped boundary condition, on $\Gamma_c$: 
\begin{equation}\label{cbc}
 u=0,\quad \partial_{\nu} u=0,
\end{equation}
where $\nu$ denotes the unit normal vector on $\Gamma_c$. It is worth noting that other types of boundary conditions, such as the Neumann boundary condition, often known as the free plate boundary condition, can be similarly taken into account.

Given the periodic nature of both the structure and the incident wave, the solution to \eqref{TotalBiharmonic}--\eqref{cbc} demonstrates quasi-periodicity. Specifically, if $u$ is a solution to \eqref{TotalBiharmonic}--\eqref{cbc}, then $u(x) e^{-{\rm i}\alpha x_1}$ is a periodic function of $x_1$ with a period of $\Lambda$. This characteristic gives rise to the quasi-periodic boundary condition on $\Gamma_l$ and $\Gamma_r$, i.e., $u$ satisfies $u(0, x_2) = e^{-{\rm i}\alpha\Lambda}u(\Lambda, x_2)$. Furthermore, to ensure the well-posedness of the problem, it is essential to impose a bounded outgoing wave condition on the scattered field $u^s=u-u^i$ in $\Omega_1$ and the total field $u$ in $\Omega_2$.

We introduce notations and function spaces employed in this work. Denote by $H^2(\Omega)$ the standard Sobolev space,  comprising functions with square-integrable values, as well as square-integrable first and second partial derivatives. Let us define the quasi-periodic function space 
\[
H_{\rm qp}^2(\Omega)=\left\{u\in H^2(\Omega): u(\Lambda, x_2)=u(0, x_2)e^{{\rm i}\alpha\Lambda}\right\},
\]
along with its subspace
\[
H_{{\rm qp}, \Gamma_c}^{2}(\Omega)=\left\{u\in H^2_{\rm qp}(\Omega): u=0, \partial_{\nu} u=0\text{ on }\Gamma_c\right\}.
\]
Clearly, $H_{\rm qp}^2(\Omega)$ and $H_{{\rm qp}, \Gamma_c}^{2}(\Omega)$ are subspaces of $H^2(\Omega)$ equipped with the standard $H^2$-norm.

Given a function $u\in H_{\rm qp}^2(\Omega)$, it allows for a Fourier expansion on $\Gamma_k, k=1, 2:$
\[
u(x, h_k)=\sum\limits_{n\in\mathbb{Z}} u^{(n)}(h_k) e^{{\rm i}\alpha_n x_1},
\]
where
\[
 \alpha_n=\alpha+n\left(\frac{2\pi}{\Lambda}\right), \quad u^{(n)}(h_k)=\frac{1}{\Lambda}\int_{0}^{\Lambda} u(x, h_k) e^{-{\rm i}\alpha_n x_1}{\rm d}x_1. 
\]
The trace function space $H^s(\Gamma_k)$, where $s \in \mathbb{R}$, is defined as follows:
\[
H^{s}(\Gamma_k)=\left\{u\in L^2(\Gamma_k): \|u\|_{H^s(\Gamma_k)}<\infty\right\},
\]
with the norm given by
\[
\|u\|_{H^s(\Gamma_k)}=\left(\Lambda\sum\limits_{n\in\mathbb{Z}}\left(1+\alpha_n^2\right)^s |u^{(n)}(h_k)|^2\right)^{1/2}.
\]

In this paper, whenever $a\lesssim b$ is used, it denotes $a\leq Cb$, with $C$ representing a positive constant. In this context, the values of the constants $c_j$ are positive and may vary in different steps of the proof. Although the specific values of $C$ and $c_j$ are not explicitly stated, their dependence should be apparent from the context.

\section{Transparent boundary conditions}\label{Section3}

In this section, we address the challenge posed by formulating the problem in an unbounded domain. To overcome this obstacle, we propose introducing an equivalent transparent boundary condition (TBC) on $\Gamma_k, k=1, 2$ with the objective of transforming the problem into the bounded domain $\Omega$. 

Let $\nu=(\nu_1, \nu_2)$ and $\tau=(\tau_1, \tau_2)$ be the unit normal and tangent vectors, respectively, to the boundary of $\Omega$. Clearly, we have $\tau_1=-\nu_2$ and $\tau_2=\nu_1$. Define the normal and tangential derivatives
\[
\partial_\nu:=\nu_1 \partial_{x_1}+\nu_2\partial_{x_2},\quad
\partial_\tau:=-\nu_2\partial_{x_1}+\nu_1\partial_{x_2}.
\]
For $0\leq\mu<1$, referred to as the Poisson ratio, define the surface differential operators (cf. \cite{HW-2021}):
\begin{equation}\label{MNOperator}
Mu:=\mu\Delta u+(1-\mu)M_0 u,\quad 
Nu:=-\partial_\nu \Delta u-(1-\mu)\partial_\tau N_0 u,
\end{equation}
where $M_0$ and $N_0$ are explicitly given by
\[
\left\{
\begin{aligned}
&M_0 u:=\nu_1^2\frac{\partial^2 u}{\partial x_1^2}+2\nu_1\nu_2\frac{\partial^2 u}{\partial x_1\partial x_2}+
	\nu_2^2\frac{\partial^2 u}{\partial x_2^2},\\
&N_0 u:=\left(\nu_1^2-\nu_2^2\right)	\frac{\partial^2 u}{\partial x_1\partial x_2}
	-\nu_1\nu_2\left(\frac{\partial^2 u}{\partial x_1^2}-\frac{\partial^2 u}{\partial x_2^2}\right).
\end{aligned}
\right.
\]

First, we derive the TBC on $\Gamma_1$. Based on the bounded outgoing wave condition, it is shown in \cite{YLYZ-RAM-2023}  that the scattered field $u^s$ can be represented by a Fourier series expansion in the domain $\Omega_1$:
\begin{equation}\label{ExpansionUs}
	u^s(x_1, x_2)=\sum\limits_{n\in\mathbb{Z}} H_1^{(n)}e^{{\rm i}\alpha_n x_1+{\rm i}\beta_n(x_2-h_1)}
	+\sum\limits_{n\in\mathbb{Z}} U_1^{(n)}e^{{\rm i}\alpha_n x_1-\gamma_n(x_2-h_1)},
\end{equation}
where $H_1^{(n)}, U_1^{(n)}\in\mathbb C$ are the Fourier coefficients, and 
\begin{equation}\label{alphabeta}
	\beta_n=\left\{
	\begin{aligned}
	&(\kappa^2-\alpha_n^2)^{1/2}\quad & &\text{if} ~ \kappa>|\alpha_n|,\\
	&{\rm i}(\alpha_n^2-\kappa^2)^{1/2}\quad& &\text{if} ~ \kappa<|\alpha_n|,
	\end{aligned}
	\right. \quad
	\gamma_n=(\kappa^2+\alpha_n^2)^{1/2}.
\end{equation}
Here, we assume that $\beta_n\neq 0$ for all $n\in\mathbb Z$ to rule out the occurrence of resonances. 

Let $\left(u, \partial_{\nu}u\right)=(f_1, g_1)$ and $\left(u^s, \partial_{\nu}u^s\right)=(\hat{f}_1, \hat{g}_1)$
be the Dirichlet data for the total and scattered fields on $\Gamma_1$, respectively. It is clear to note that these data satisfy the relations
\[
	f_1(x_1)=\hat{f}_1(x_1)+e^{{\rm i}\left(\alpha x_1-\beta h_1\right)},\quad 
	g_1(x_1)=\hat{g}_1(x_1)-{\rm i}\beta e^{{\rm i}\left(\alpha x_1-\beta h_1\right)}.
\]
Being quasi-periodic functions, $\hat f_1$ and $\hat g_1$ admit the Fourier series expansions
\[
u^s(x_1, h)=\hat f_1(x_1) =\sum_{n\in\mathbb Z} \hat f_1^{(n)}e^{{\rm i}\alpha_n x_1},\quad \partial_\nu u^s(x_1, h_1)=\hat g_1(x_1) =\sum_{n\in\mathbb Z} \hat g_1^{(n)}e^{{\rm i}\alpha_n x_1},
\]
where $\hat f_1^{(n)}, \hat g_1^{(n)}\in\mathbb C$ are the Fourier coefficients. 

On the other hand, evaluating the scattered field $u^s$, as defined in \eqref{ExpansionUs}, and its normal derivative $\partial_{x_2} u^s$ on $\Gamma_1$, we obtain 
\[
\left\{
\begin{aligned}
	u^s(x_1,  h_1)&=\sum\limits_{n\in\mathbb{Z}} H_1^{(n)}e^{{\rm i}\alpha_n x_1}
	+\sum\limits_{n\in\mathbb{Z}} U_1^{(n)}e^{{\rm i}\alpha_n x_1},\\
	\partial_{x_2} u^s(x_1, h_1)&=\sum\limits_{n\in\mathbb{Z}} {\rm i}\beta_n H_1^{(n)}
		e^{{\rm i}\alpha_n x_1}-\sum\limits_{n\in\mathbb{Z}} \gamma_n U_1^{(n)}e^{{\rm i}\alpha_n x_1}.
\end{aligned}
\right.
\]
Combining the above equations, we have from straightforward calculations that the scattered field $u^s$ in domain $\Omega_1$ can be expressed as
\begin{align}\label{Uupper}
	u^s(x_1, x_2)&=\sum\limits_{n\in\mathbb{Z}} \left(
		\frac{\gamma_n \hat{f}^{(n)}_1+\hat{g}^{(n)}_1}{\gamma_n+{\rm i}\beta_n}\right)e^{{\rm i}\alpha_n x_1
		+{\rm i}\beta_n(x_2-h_1)}\notag\\
		&\quad +\sum\limits_{n\in\mathbb{Z}}\left( \frac{{\rm i}\beta_n \hat{f}^{(n)}_1-\hat{g}^{(n)}_1}{\gamma_n+{\rm i}\beta_n}\right)e^{{\rm i}\alpha_n x_1-\gamma_n(x_2-h_1)}.
\end{align}

On $\Gamma_1$, the surface differential operators $M$ and $N$ given in \eqref{MNOperator} can be simplified to 
\begin{equation}\label{MNupper}
	N_1 u=-(2-\mu)\frac{\partial^3 u}{\partial x_1^2\partial x_2}-\frac{\partial^3 u}{\partial x_2^3},\quad
	M_1 u=\mu\frac{\partial^2 u}{\partial x_1^2}+\frac{\partial^2 u}{\partial x_2^2}. 
\end{equation}
Substituting \eqref{Uupper} into \eqref{MNupper} yields the TBC of the scattered field $u^s$ on $\Gamma_1$:
 \begin{equation*}
N_1 u^s=T_{11}^{(1)} \hat{f}_1+T_{12}^{(1)}\hat{g}_1,\quad 
M_1 u^s=T_{21}^{(1)}\hat{f}_1+T_{22}^{(1)}\hat{g}_1.
\end{equation*}
Here, the Dirichlet-to-Neumann (DtN) operators $T_{ij}^{(1)}, i,j=1, 2$ are given by 
\begin{equation}\label{T1}
\left\{
\begin{aligned}
	(T_{11}^{(1)} f)(x_1) &=\sum\limits_{n\in\mathbb{Z}}
	{\rm i}\beta_n\gamma_n\left(\gamma_n-{\rm i}\beta_n\right)f^{(n)} e^{{\rm i}\alpha_n x_1},\\
	(T_{21}^{(1)} f)(x_1) &=-\sum\limits_{n\in\mathbb{Z}} \left(\mu\alpha_n^2
	-{\rm i}\beta_n\gamma_n\right) f^{(n)} e^{{\rm i}\alpha_n x_1},\\
	(T_{12}^{(1)}g)(x_1) &=-\sum\limits_{n\in\mathbb{Z}} \left(\mu\alpha_n^2-{\rm i}\beta_n\gamma_n\right) g^{(n)} 
	e^{{\rm i}\alpha_n x_1},\\
	(T_{22}^{(1)}g)(x_1) &=-\sum\limits_{n\in\mathbb{Z}} \left(\gamma_n-{\rm i}\beta_n\right)g^{(n)} e^{{\rm i}\alpha_n x_1},
\end{aligned}
\right.
\end{equation}
where $f^{(n)}$ and $g^{(n)}$ are the Fourier coefficients of $f$ and $g$, respectively. Noting $u=u^s+u^i$, we deduce the TBC for the total field $u$ on $\Gamma_1$:  
 \begin{equation}\label{DtNGamma1}
N_1 u=T_{11}^{(1)} f_1+T_{12}^{(1)}g_1+p_1,\quad 
M_1 u=T_{21}^{(1)}f_1+T_{22}^{(1)}g_1+p_2.
\end{equation}
where
\begin{equation}\label{p12}
p_1(x_1)=-\left(2{\rm i}\beta\alpha^2+2\beta^2\gamma\right) e^{{\rm i}(\alpha x_1-\beta h_1)},\quad
p_2(x_1)=-(2\beta^2+2{\rm i}\beta\gamma)e^{{\rm i}(\alpha x_1-\beta h_1)}.
\end{equation}

Given the similarity in the derivation process of the TBC to that of $\Gamma_2$, we provide a brief overview of the procedure and present the resulting TBC. In accordance with the bounded outgoing wave condition, the total field $u$ exhibits the Fourier series expansion in $\Omega_2$: 
\begin{equation}\label{ExpansionUGamma2}
	u(x_1, x_2)=\sum\limits_{n\in\mathbb{Z}} H_2^{(n)}e^{{\rm i}\alpha_n x_1-{\rm i}\beta_n(x_2-h_2)}
	+\sum\limits_{n\in\mathbb{Z}} U_2^{(n)}e^{{\rm i}\alpha_n x_1+\gamma_n(x_2-h_2)}. 
\end{equation}
Evaluating \eqref{ExpansionUGamma2} and its normal derivative on $\Gamma_2$, we obtain 
\begin{equation}\label{EquationGamma2}
\left\{
\begin{aligned}
u(x_1, h_2)&=\sum\limits_{n\in\mathbb{Z}} H_2^{(n)}e^{{\rm i}\alpha_n x_1}
	+\sum\limits_{n\in\mathbb{Z}} U_2^{(n)}e^{{\rm i}\alpha_n x_1}
	=\sum\limits_{n\in\mathbb{Z}}f^{(n)}_2 e^{{\rm i}\alpha_n x_1},\\
\partial_{\nu} u(x_1, h_2)&=\sum\limits_{n\in\mathbb{Z}} {\rm i}\beta_n H_2^{(n)}e^{{\rm i}\alpha_n x_1}
	-\sum\limits_{n\in\mathbb{Z}} \gamma_n U_2^{(n)}e^{{\rm i}\alpha_n x_1}
	=\sum\limits_{n\in\mathbb{Z}}g^{(n)}_2 e^{{\rm i}\alpha_n x_1},
\end{aligned}
\right.
\end{equation}
where $(u, \partial_\nu u)=(f_2, g_2)$ are the Dirichlet data on $\Gamma_2$ and have the Fourier series expansions
\[
	f_2(x_1)=\sum\limits_{n\in\mathbb{Z}}f^{(n)}_2 e^{{\rm i}\alpha_n x_1},\quad
	g_2(x_1)=\sum\limits_{n\in\mathbb{Z}}g^{(n)}_2 e^{{\rm i}\alpha_n x_1}.
\]

By solving the system \eqref{EquationGamma2}, we deduce that the total field $u$ in $\Omega_2$ admits the Fourier series expansion
\begin{align}\label{Ulower}
	u(x_1, x_2)&=\sum\limits_{n\in\mathbb{Z}} \left(
		\frac{\gamma_n f^{(n)}_2+g^{(n)}_2}{\gamma_n+{\rm i}\beta_n}\right)e^{{\rm i}\alpha_n x_1-{\rm i}\beta_n(x_2-h_2)}\notag\\
		&\quad +\sum\limits_{n\in\mathbb{Z}} \left(\frac{{\rm i}\beta_n f^{(n)}_2-g^{(n)}_2}{\gamma_n+{\rm i}\beta_n}
		\right)e^{{\rm i}\alpha_n x_1+\gamma_n(x_2-h_2)}.
\end{align}
Noting that the surface differential operator $M$ and $N$ on $\Gamma_2$ can be simplified to 
\begin{equation}\label{MNlower}
	N_2 u=(2-\mu)\frac{\partial^3 u}{\partial x_1^2\partial x_2}+\frac{\partial^3 u}{\partial x_2^3},\quad
	M_2 u=\mu\frac{\partial^2 u}{\partial x_1^2}+\frac{\partial^2 u}{\partial x_2^2}, 
\end{equation}
we substitute \eqref{Ulower} into \eqref{MNlower} and obtain the TBC of the total field $u$ on $\Gamma_2$: 
\begin{equation}\label{DtNGamma2}
N_2 u=T_{11}^{(2)} f_2+T_{12}^{(2)}g_2,\quad  M_2 u=T_{21}^{(2)}f_2+T_{22}^{(2)}g_2,
\end{equation}
where the DTN operators $T_{ij}^{(2)}, i,j=1,2$ are defined as
\begin{equation*}
\left\{
\begin{aligned}
	(T_{11}^{(2)} f)(x_1) &=\sum\limits_{n\in\mathbb{Z}}
	{\rm i}\beta_n\gamma_n\left(\gamma_n-{\rm i}\beta_n\right)f^{(n)}_2e^{{\rm i}\alpha_n x_1},\\
	(T_{21}^{(2)} f)(x_1) &=-\sum\limits_{n\in\mathbb{Z}} \left(\mu\alpha_n^2
	-{\rm i}\beta_n\gamma_n\right) f^{(n)}_2e^{{\rm i}\alpha_n x_1},\\
	(T_{12}^{(2)}g)(x_1) &=-\sum\limits_{n\in\mathbb{Z}} \left(\mu\alpha_n^2-{\rm i}\beta_n\gamma_n\right) g^{(n)}_2
	e^{{\rm i}\alpha_n x_1},\\
	(T_{22}^{(2)}g)(x_1) &=-\sum\limits_{n\in\mathbb{Z}} \left(\gamma_n-{\rm i}\beta_n\right)g^{(n)}_2e^{{\rm i}\alpha_n x_1}. 
\end{aligned}
\right.
\end{equation*}

The following result concerns the properties of the DtN operators $T^{(1)}_{ij}$ and $T^{(2)}_{ij}$, where $i,j=1,2.$

\begin{lemma}\label{BoundTBC}
For $k=1,2$, the DtN operators $T^{(k)}_{11}: H^{3/2}(\Gamma_k)\to H^{-3/2}(\Gamma_k)$,
$T^{(k)}_{12}: H^{1/2}(\Gamma_k)\to H^{-3/2}(\Gamma_k)$, $T^{(k)}_{21}: H^{3/2}(\Gamma_k)\to H^{-1/2}(\Gamma_k)$, and $T^{(k)}_{22}: H^{1/2}(\Gamma_k)\to H^{-1/2}(\Gamma_k)$ are bounded. 
\end{lemma}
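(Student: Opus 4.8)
The plan is to observe that each operator $T^{(k)}_{ij}$ is, directly from its definition in \eqref{T1} and the analogous formulas on $\Gamma_2$, a Fourier multiplier acting diagonally on the quasi-periodic modes $e^{{\rm i}\alpha_n x_1}$: if $f(x_1)=\sum_n f^{(n)}e^{{\rm i}\alpha_n x_1}$ then $(T^{(k)}_{ij}f)(x_1)=\sum_n m^{(k)}_{ij}(n)\,f^{(n)}e^{{\rm i}\alpha_n x_1}$ with symbols $m^{(k)}_{11}(n)={\rm i}\beta_n\gamma_n(\gamma_n-{\rm i}\beta_n)$, $m^{(k)}_{12}(n)=m^{(k)}_{21}(n)=-(\mu\alpha_n^2-{\rm i}\beta_n\gamma_n)$, and $m^{(k)}_{22}(n)=-(\gamma_n-{\rm i}\beta_n)$. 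In view of the definition of the norm on $H^s(\Gamma_k)$, boundedness of $T^{(k)}_{ij}\colon H^{s_1}(\Gamma_k)\to H^{s_2}(\Gamma_k)$ is equivalent to the uniform symbol estimate $|m^{(k)}_{ij}(n)|\lesssim(1+\alpha_n^2)^{(s_1-s_2)/2}$ for all $n\in\mathbb Z$. Thus the lemma reduces to four elementary estimates on the growth of these symbols as $|n|\to\infty$.

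The second step is to record the growth bounds on $\alpha_n,\beta_n,\gamma_n$ from \eqref{alphabeta}. Clearly $|\alpha_n|\le(1+\alpha_n^2)^{1/2}$ and $\gamma_n=(\kappa^2+\alpha_n^2)^{1/2}\lesssim(1+\alpha_n^2)^{1/2}$. For $\beta_n$ I would split $\mathbb Z$ into the \emph{finitely many} propagating indices, for which $|\alpha_n|<\kappa$ and hence $|\beta_n|\le\kappa$, and the evanescent indices, for which $|\beta_n|=(\alpha_n^2-\kappa^2)^{1/2}\le|\alpha_n|$; in either case $|\beta_n|\lesssim(1+\alpha_n^2)^{1/2}$, the bounded contribution of the finite propagating set being absorbed into the implicit constant. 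The same case split gives $|\gamma_n-{\rm i}\beta_n|\lesssim(1+\alpha_n^2)^{1/2}$: in the evanescent case $\gamma_n-{\rm i}\beta_n=\gamma_n+(\alpha_n^2-\kappa^2)^{1/2}>0$, while in the propagating case $|\gamma_n-{\rm i}\beta_n|=(\gamma_n^2+\beta_n^2)^{1/2}$ stays bounded.

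Combining these bounds yields $|m^{(k)}_{11}(n)|=|\beta_n|\,\gamma_n\,|\gamma_n-{\rm i}\beta_n|\lesssim(1+\alpha_n^2)^{3/2}$, matching the exponent $(s_1-s_2)/2=3/2$ for $(s_1,s_2)=(3/2,-3/2)$; $|m^{(k)}_{12}(n)|=|m^{(k)}_{21}(n)|\le\mu\alpha_n^2+|\beta_n|\gamma_n\lesssim(1+\alpha_n^2)$ (using $0\le\mu<1$), matching the exponent $1$ for both $(1/2,-3/2)$ and $(3/2,-1/2)$; and $|m^{(k)}_{22}(n)|=|\gamma_n-{\rm i}\beta_n|\lesssim(1+\alpha_n^2)^{1/2}$, matching the exponent $1/2$ for $(1/2,-1/2)$. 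Inserting each estimate into $\|T^{(k)}_{ij}f\|_{H^{s_2}(\Gamma_k)}^2=\Lambda\sum_n(1+\alpha_n^2)^{s_2}|m^{(k)}_{ij}(n)|^2|f^{(n)}|^2$ and comparing with $\|f\|_{H^{s_1}(\Gamma_k)}^2$ gives the asserted boundedness in all four cases, with identical arguments for $k=1$ and $k=2$. There is no serious obstacle here; the only point requiring a little care is the uniform treatment of the finitely many propagating modes alongside the evanescent tail, so that a single constant covers both.
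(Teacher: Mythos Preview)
Your proposal is correct and follows essentially the same route as the paper: both reduce the boundedness to symbol estimates of the form $|m_{ij}(n)|\lesssim(1+\alpha_n^2)^{(s_1-s_2)/2}$, obtained from the growth $|\beta_n|,\gamma_n\sim|\alpha_n|$ as $|n|\to\infty$ (the paper records this as \eqref{abg}), with the finitely many low modes handled trivially. Your write-up is slightly more explicit about the propagating/evanescent split, but the argument is the same.
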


\begin{proof}
We only prove the results for the operators $T_{ij}^{(1)}$, as the corresponding properties for the operators $T_{ij}^{(2)}$ can be obtained in the same manner. It is clear to note from \eqref{alphabeta} that
\begin{equation}\label{abg}
\lim\limits_{|n|\rightarrow\infty} \frac{|\beta_n|}{|\alpha_n|}=1,\quad
\lim\limits_{|n|\rightarrow\infty} \frac{|\gamma_n|}{|\alpha_n|}=1.
\end{equation}
For a given $f\in H^{3/2}(\Gamma_1)$, we have from \eqref{T1} and \eqref{abg} that
\begin{align*}
	\|T^{(1)}_{11} f\|^2_{H^{-3/2}(\Gamma_1)} &=\Lambda
	\sum\limits_{n\in\mathbb{Z}}\left(1+\alpha_n^2\right)^{-3/2}\big|
	{\rm i}\beta_n\gamma_n\left(\gamma_n-{\rm i}\beta_n\right)f^{(n)}\big|^2\\
	&\lesssim \sum\limits_{n\in\mathbb{Z}}(1+\alpha_n^2)^{3/2}|f^{(n)}|^2\lesssim \|f\|^2_{H^{3/2}(\Gamma_1)},
\end{align*}
and
\begin{align*}
	\|T^{(1)}_{21} f\|^2_{H^{-1/2}(\Gamma_1)} &=\Lambda
	\sum\limits_{n\in\mathbb{Z}}\left(1+\alpha_n^2\right)^{-1/2}\big|
	 \left(\mu\alpha_n^2-{\rm i}\beta_n\gamma_n\right)f^{(n)}\big|^2\\
	&\lesssim \sum\limits_{n\in\mathbb{Z}}(1+\alpha_n^2)^{3/2}|f^{(n)}|^2\lesssim \|f\|^2_{H^{3/2}(\Gamma_1)}.
\end{align*}
Similarly, for any function $f\in H^{1/2}(\Gamma_1)$, we deduce from \eqref{T1} and \eqref{abg} that
\begin{align*}
	\|T^{(1)}_{12} f\|^2_{H^{-3/2}(\Gamma_1)} &=\Lambda
	\sum\limits_{n\in\mathbb{Z}}\left(1+\alpha_n^2\right)^{-3/2}\big|
	 \left(\mu\alpha_n^2-{\rm i}\beta_n\gamma_n\right)g^{(n)}\big|^2\\
	&\lesssim \sum\limits_{n\in\mathbb{Z}}(1+\alpha_n^2)^{1/2}|f^{(n)}|^2\lesssim \|f\|^2_{H^{1/2}(\Gamma_1)},
\end{align*}
and
\begin{align*}
	\|T^{(1)}_{22} f\|^2_{H^{-1/2}(\Gamma_1)} &=\Lambda
	\sum\limits_{n\in\mathbb{Z}}\left(1+\alpha_n^2\right)^{-1/2}\big|
	 \left(\gamma_n-{\rm i}\beta_n\right)f^{(n)}\big|^2\\
	&\lesssim \sum\limits_{n\in\mathbb{Z}}(1+\alpha_n^2)^{1/2}|f^{(n)}|^2\lesssim \|f\|^2_{H^{1/2}(\Gamma_1)},
\end{align*}
thus completing the proof.
\end{proof}

\begin{lemma}\label{PositiveDtN}
If $|n|$ is sufficiently large, then the following inequality holds for any complex values $f$ and $g$:
\begin{eqnarray*}
	\Re\big\{
	-{\rm i}\beta_n\gamma_n\left(\gamma_n-{\rm i}\beta_n\right)|f|^2
	+\left(\mu\alpha_n^2-{\rm i}\beta_n\gamma_n\right)g\bar{f}\\
	+\left(\mu\alpha_n^2-{\rm i}\beta_n\gamma_n\right) f\bar{g}
	+ \left(\gamma_n-{\rm i}\beta_n\right)|g|^2
	\big\}\geq0.
\end{eqnarray*}
\end{lemma}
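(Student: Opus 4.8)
The plan is to exploit the observation that, as soon as $|n|$ is large enough that $|\alpha_n|>\kappa$, the quantity $\beta_n$ in \eqref{alphabeta} is purely imaginary while $\gamma_n>0$ is real. I would therefore write $\beta_n={\rm i}b_n$ with $b_n:=(\alpha_n^2-\kappa^2)^{1/2}>0$ and substitute this into the bracketed expression. Since ${\rm i}\beta_n=-b_n$, one finds
\[
-{\rm i}\beta_n\gamma_n(\gamma_n-{\rm i}\beta_n)=b_n\gamma_n(\gamma_n+b_n),\qquad
\mu\alpha_n^2-{\rm i}\beta_n\gamma_n=\mu\alpha_n^2+b_n\gamma_n,\qquad
\gamma_n-{\rm i}\beta_n=\gamma_n+b_n,
\]
so that all three coefficients are real and strictly positive for such $n$. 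Because $g\bar f+f\bar g=2\Re(f\bar g)$ is also real, the outer $\Re\{\cdot\}$ becomes superfluous and the quantity in question equals the real quadratic form
\[
Q_n(f,g):=b_n\gamma_n(\gamma_n+b_n)\,|f|^2+2\big(\mu\alpha_n^2+b_n\gamma_n\big)\Re(f\bar g)+(\gamma_n+b_n)\,|g|^2 .
\]

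I would then view $Q_n$ as the Hermitian form on $\mathbb C^2$ associated with the real symmetric matrix
\[
M_n:=\begin{pmatrix} b_n\gamma_n(\gamma_n+b_n) & \mu\alpha_n^2+b_n\gamma_n\\ \mu\alpha_n^2+b_n\gamma_n & \gamma_n+b_n\end{pmatrix},
\]
so that $Q_n(f,g)\ge 0$ for all $f,g\in\mathbb C$ is equivalent to $M_n$ being positive semidefinite. Since the diagonal entries of $M_n$ are strictly positive, it suffices (by completing the square, or by the leading-minor criterion) to verify $\det M_n\ge 0$, i.e. $b_n\gamma_n(\gamma_n+b_n)^2\ge(\mu\alpha_n^2+b_n\gamma_n)^2$. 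Using the identities $b_n\gamma_n=(\alpha_n^4-\kappa^4)^{1/2}$ and $(\gamma_n+b_n)^2=2\alpha_n^2+2(\alpha_n^4-\kappa^4)^{1/2}$, a short computation gives
\[
\det M_n=2(1-\mu)\,\alpha_n^2(\alpha_n^4-\kappa^4)^{1/2}+(1-\mu^2)\,\alpha_n^4-\kappa^4 .
\]

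The proof then concludes by noting that $0\le\mu<1$ makes both $1-\mu$ and $1-\mu^2$ strictly positive; hence the first two terms on the right are nonnegative (the first strictly positive) whenever $|\alpha_n|>\kappa$, while $(1-\mu^2)\alpha_n^4\to\infty$ as $|n|\to\infty$, so $\det M_n>0$ for all sufficiently large $|n|$. (Equivalently, \eqref{abg} gives $\alpha_n^{-4}\det M_n\to(1-\mu)(3+\mu)>0$.) Thus $M_n$ is positive definite and $Q_n(f,g)\ge 0$, which is the claimed inequality. The only delicate point — and the one I expect to be the main obstacle in making the argument airtight — is controlling the sign of $\det M_n$: this is exactly where the hypothesis $\mu<1$ enters, and it also explains why the statement is restricted to large $|n|$, since for small $|n|$ the coefficient $\beta_n$ is genuinely complex, which is precisely why the lemma is phrased with an outer real part together with an asymptotic hypothesis.
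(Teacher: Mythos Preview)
Your proof is correct and follows essentially the same route as the paper: both arguments observe that for large $|n|$ the three coefficients are real and positive, then reduce the claim to the discriminant-type inequality $b_n\gamma_n(\gamma_n+b_n)^2\ge(\mu\alpha_n^2+b_n\gamma_n)^2$, and verify it by direct computation using $\gamma_n^2-b_n^2=2\kappa^2$ type identities together with $\mu<1$. Your matrix/leading-minor phrasing is a bit cleaner, and your determinant formula $\det M_n=2(1-\mu)\alpha_n^2(\alpha_n^4-\kappa^4)^{1/2}+(1-\mu^2)\alpha_n^4-\kappa^4$ is in fact the correct one (the paper's intermediate line has $-\mu\alpha_n^4$ where $-\mu^2\alpha_n^4$ should appear, a harmless typo).
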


\begin{proof}
By the definitions of $\gamma_n$ and $\beta_n$ given in \eqref{alphabeta}, a straightforward calculation shows that for sufficiently large $|n|$
\[
-{\rm i}\beta_n\gamma_n\left(\gamma_n-{\rm i}\beta_n\right)>0,\quad\mu\alpha_n^2-{\rm i}\beta_n\gamma_n>0,\quad
\gamma_n-{\rm i}\beta_n>0.
\]
It suffices to demonstrate for sufficiently large $|n|$ that
\[
	-{\rm i}\beta_n\gamma_n\left(\gamma_n-{\rm i}\beta_n\right)|f|^2
	-2\left(\mu\alpha_n^2-{\rm i}\beta_n\gamma_n\right)|f||g|
	+\left(\gamma_n-{\rm i}\beta_n\right)|g|^2\geq0.
\]
Utilizing the Cauchy inequality, we deduce from a simple calculation that
\begin{align*}
	&\big((-{\rm i}\beta_n\gamma_n)^{1/2}\left(\gamma_n-{\rm i}\beta_n\right)\big)^2
		-\big( \mu\alpha_n^2-{\rm i}\beta_n\gamma_n\big)^2\\
	&= -\gamma_n^2\beta_n^2-\mu\alpha_n^4-2{\rm i}(1-\mu)\gamma_n\beta_n\alpha_n^2\\
	&= \alpha_n^4-\kappa^4-\mu\alpha_n^4-2{\rm i}(1-\mu)\gamma_n\beta_n\alpha_n^2\\
	&= (1-\mu)\left(\alpha_n^4-2{\rm i}\gamma_n\beta_n\alpha_n^2\right)- \kappa^4,
\end{align*}
which is positive for sufficiently large $|n|$ by noting that $\mu<1$ and $\alpha_n^4-2{\rm i}\gamma_n\beta_n\alpha_n^2\to\infty$ as $|n|\to\infty$.
\end{proof}

\begin{lemma}\label{PositiveTBC}
For any $f\in H^{3/2}(\Gamma_k)$ and $g\in H^{1/2}(\Gamma_k)$ with $k=1, 2$, there exists a positive constant $C$ such that 
\begin{eqnarray*}
-\Re\left\{\int_{\Gamma_k} \Big(T_{11}^{(k)} |f|^2+T_{12}^{(k)} g\bar{f}
 +T_{21}^{(k)} f\bar{g}+T_{22}^{(k)} |g|^2\Big){\rm d}s\right\}\\
 \geq -C\Big(\|f\|^2_{L^{2}(\Gamma_k)}+\|g\|^2_{L^2(\Gamma_k)}\Big). 
 \end{eqnarray*}
\end{lemma}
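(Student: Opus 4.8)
The plan is to expand all four DtN operators in Fourier series, reduce the boundary integral to an infinite sum over Fourier modes using Parseval's identity, and then split the sum into a finite part (small $|n|$) and a tail part (large $|n|$). For the tail, Lemma~\ref{PositiveDtN} furnishes the required nonnegativity, so that part contributes nonnegatively (or at least $\geq 0$) to the left-hand side. For the remaining finitely many modes, crude bounds suffice to absorb them into the $L^2$-norms on the right. I will only treat $k=1$; the case $k=2$ is identical because $T^{(1)}_{ij}$ and $T^{(2)}_{ij}$ have the same Fourier symbols.

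First I would write, using the definitions of $T^{(1)}_{ij}$ in \eqref{T1} and the identity $\int_{\Gamma_1} e^{{\rm i}\alpha_m x_1}\overline{e^{{\rm i}\alpha_n x_1}}\,{\rm d}s = \Lambda\,\delta_{mn}$,
\begin{align*}
-\Re\Big\{\int_{\Gamma_1}\big(T_{11}^{(1)}|f|^2 + T_{12}^{(1)} g\bar f + T_{21}^{(1)} f\bar g + T_{22}^{(1)}|g|^2\big)\,{\rm d}s\Big\}
&= \Lambda\sum_{n\in\mathbb Z}\Re\big\{-{\rm i}\beta_n\gamma_n(\gamma_n-{\rm i}\beta_n)|f^{(n)}|^2\\
&\quad + (\mu\alpha_n^2-{\rm i}\beta_n\gamma_n)g^{(n)}\overline{f^{(n)}} + (\mu\alpha_n^2-{\rm i}\beta_n\gamma_n)f^{(n)}\overline{g^{(n)}}\\
&\quad + (\gamma_n-{\rm i}\beta_n)|g^{(n)}|^2\big\},
\end{align*}
where $f^{(n)}, g^{(n)}$ are the Fourier coefficients of $f, g$ on $\Gamma_1$. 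By Lemma~\ref{PositiveDtN} there is an integer $N$ such that every term with $|n|>N$ is nonnegative; hence the full sum is bounded below by the finite partial sum over $|n|\leq N$, i.e. by $\Lambda\sum_{|n|\leq N}(\cdots)$.

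For the finitely many modes with $|n|\leq N$, each coefficient $|{-}{\rm i}\beta_n\gamma_n(\gamma_n-{\rm i}\beta_n)|$, $|\mu\alpha_n^2-{\rm i}\beta_n\gamma_n|$, $|\gamma_n-{\rm i}\beta_n|$ is a fixed finite number, so by Cauchy--Schwarz the partial sum is bounded below by $-C_N\sum_{|n|\leq N}(|f^{(n)}|^2+|g^{(n)}|^2)$ for some constant $C_N$ depending on $N$, $\kappa$, $\alpha$, $\mu$, $\Lambda$. Finally, $\sum_{|n|\leq N}|f^{(n)}|^2 \leq \sum_{n\in\mathbb Z}|f^{(n)}|^2 = \Lambda^{-1}\|f\|^2_{L^2(\Gamma_1)}$ (and likewise for $g$), which yields the claimed estimate with $C = \Lambda C_N$. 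The only genuinely delicate point is the sign analysis packaged into Lemma~\ref{PositiveDtN}; once that is invoked the rest is bookkeeping. I would also note in passing that the hypotheses $f\in H^{3/2}(\Gamma_1)$, $g\in H^{1/2}(\Gamma_1)$ guarantee, via Lemma~\ref{BoundTBC}, that all the integrals above are well defined, so the manipulations are legitimate.
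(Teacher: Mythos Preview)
Your proposal is correct and follows essentially the same approach as the paper: expand in Fourier modes via Parseval, split into a finite part $|n|\leq n_0$ and a tail $|n|>n_0$, invoke Lemma~\ref{PositiveDtN} to make the tail nonnegative, and bound the finite part crudely by $L^2$-norms. The paper's proof is slightly terser but structurally identical.
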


\begin{proof}
We prove the result for $k=1$, as the result for $k=2$ can be obtained similarly. By the definitions of $T^{(1)}_{ij}$ and Lemma \ref{PositiveDtN}, we have
\begin{align*}
&-\Re\int_{\Gamma_1} \left(T_{11}^{(1)} |f|^2 +T_{12}^{(1)} g\bar{f}
 +T_{21}^{(1)} f\bar{g}+T_{22}^{(1)}|g|^2\right){\rm d}s\\
 &=\Lambda\sum\limits_{|n|\leq n_0}
 	\Re\Big\{
	-{\rm i}\beta_n\gamma_n\left(\gamma_n-{\rm i}\beta_n\right)|f^{(n)}|^2
	+\left(\mu\alpha_n^2-{\rm i}\beta_n\gamma_n\right)g^{(n)}\overline{f^{(n)}}\\
&\quad +\left(\mu\alpha_n^2-{\rm i}\beta_n\gamma_n\right) f^{(n)} \overline{g^{(n)}}
	+ \left(\gamma_n-{\rm i}\beta_n\right)|g^{(n)}|^2
	\Big\}\\
&\quad +\Lambda\sum\limits_{|n|> n_0}	\Re\Big\{
	-{\rm i}\beta_n\gamma_n\left(\gamma_n-{\rm i}\beta_n\right)|f^{(n)}|^2\\
&\quad +\left(\mu\alpha_n^2-{\rm i}\beta_n\gamma_n\right)g^{(n)}\overline{f^{(n)}}
	+\left(\mu\alpha_n^2-{\rm i}\beta_n\gamma_n\right) f^{(n)}\overline{g^{(n)}}
	+ \left(\gamma_n-{\rm i}\beta_n\right)|g^{(n)}|^2
	\Big\}\\
&\geq \Lambda\sum\limits_{|n|\leq n_0}
 	\Re\Big\{
	-{\rm i}\beta_n\gamma_n\left(\gamma_n-{\rm i}\beta_n\right)|f^{(n)}|^2
	+\left(\mu\alpha_n^2-{\rm i}\beta_n\gamma_n\right)g^{(n)}\overline{f^{(n)}}\\
&\quad +\left(\mu\alpha_n^2-{\rm i}\beta_n\gamma_n\right) f^{(n)}\overline{g^{(n)}}
	+ \left(\gamma_n-{\rm i}\beta_n\right)|g^{(n)}|^2
	\Big\}\\
&\geq -C\left(\|f\|^2_{L^{2}(\Gamma_1)}+\|g\|^2_{L^2(\Gamma_1)}\right),
\end{align*}
where $n_0$ is the smallest integer such that Lemma \ref{PositiveDtN} holds. 
\end{proof}

Utilizing the TBCs given by \eqref{DtNGamma1} and \eqref{DtNGamma2}, we transform the original problem
\eqref{TotalBiharmonic}--\eqref{cbc} from an unbounded domain into the bounded domain $\Omega$, which is to find a quasi-periodic function $u$ satisfying
\begin{equation}\label{TotalBiharmonicReduced}
\left\{
\begin{aligned}
	& \Delta^2 u-\kappa^4 u=0 &&\text{in} ~ \Omega,\\
	& u=0,\quad \partial_{\nu} u=0 & &\text{on} ~ \Gamma_c,\\
	& N_1 u=T_{11}^{(1)} f_1+T_{12}^{(1)}g_1+p_1 &&\text{on} ~ \Gamma_1,\\
	& M_1 u=T_{21}^{(1)}f_1+T_{22}^{(1)}g_1+p_2 &&\text{on} ~ \Gamma_1,\\
	& N_2 u=T_{11}^{(2)} f_2+T_{12}^{(2)}g_2 & &\text{on} ~ \Gamma_2,\\
	& M_2 u=T_{21}^{(2)}f_2+T_{22}^{(2)}g_2 & &\text{on} ~ \Gamma_2,
\end{aligned}
\right.
\end{equation}
where $p_1$ and $p_2$ are defined in \eqref{p12}, and $(f_k, g_k)$ are the Dirichlet data of the total field $u$ on $\Gamma_k$ for $k=1, 2.$ The objective of this study is to examine the PML formulation applied to the boundary value problem \eqref{TotalBiharmonicReduced} and to establish the convergence of the PML solution.

\section{The variational problem}\label{Section4}

In this section, we present a variational formulation for the boundary value problem \eqref{TotalBiharmonicReduced} and examine its well-posedness.

Observing that the bi-Laplacian can be expressed in terms of the Poisson ratio, as demonstrated in \cite{HW-2021}, we have
\begin{align*}
 \Delta^2 u -\kappa^4 u & = \frac{\partial^2 }{\partial x_1^2}\left(\frac{\partial^2 u}{\partial x_1^2}+\mu\frac{\partial^2 u}{\partial x_2^2} \right)+2(1-\mu)\frac{\partial^2 }{\partial x_1 \partial x_2}\left(\frac{\partial^2 u}{\partial x_1\partial x_2}\right)\\
&\quad + \frac{\partial^2 }{\partial x_2^2}\left(\frac{\partial^2 u}{\partial x_2^2}+\mu\frac{\partial^2 u}{\partial x_1^2} \right)-\kappa^4 u. 
\end{align*}
Multipling both sides of the above equation with a test function $v\in H_{{\rm qp}, \Gamma_c}^2(\Omega)$, integrating across the domain $\Omega$, and applying integration by parts, we obtain 
\begin{align}\label{Identity}
\int_{\Omega} \bigg[\mu\Delta u\Delta \bar{v}+(1-\mu)\sum\limits_{i,j=1}^2\frac{\partial^2 u}{\partial x_i\partial x_j}\frac{\partial^2 \bar{v}}{\partial x_i\partial x_j}-\kappa^4 u\bar{v}\bigg]{\rm d}x\notag\\
-\sum_{k=1}^2\int_{\Gamma_k} \left(\bar{v}N_k u+\partial_\nu \bar{v}M_k u\right){\rm d}s
=0.
\end{align}
Substituting the TBCs on $\Gamma_k, k=1, 2$ into \eqref{Identity}, we arrive at the variational problem:
find $u\in H_{{\rm qp}, \Gamma_c}^2(\Omega)$ such that 
\begin{equation}\label{variationalTBC}
	a(u, v)=\int_{\Gamma_1} \left(p_1\bar{v} + p_2 \partial_\nu \bar{v}\right){\rm d}s\quad \forall\, v\in H_{{\rm qp}, \Gamma_c}^2(\Omega),
\end{equation}
where the sesquilinear form $a(u, v):  H_{{\rm qp}, \Gamma_c}^2(\Omega)\times  
 H_{{\rm qp}, \Gamma_c}^2(\Omega)\rightarrow \mathbb{C}$ is defined as 
\begin{align}\label{tbcauv}
	a(u, v)=\int_{\Omega} \bigg[\mu\Delta u\Delta \bar{v}+(1-\mu)\sum\limits_{i,j=1}^2
	 \frac{\partial^2 u}{\partial x_i\partial x_j}
\frac{\partial^2 \bar{v}}{\partial x_i\partial x_j}-\kappa^4 u\bar{v}\bigg]{\rm d}x\notag\\
-\sum_{k=1}^2 \int_{\Gamma_k}  (\mathbb T^{(k)}\boldsymbol{u})\cdot\overline{\boldsymbol v}{\rm d}s
\end{align}
with $\boldsymbol{u}, \boldsymbol{v}$, and $\mathbb T^{(k)}$ given by 
\[
 \boldsymbol{u}=\begin{bmatrix}
                 u\\
                 \partial_\nu u
                \end{bmatrix},\quad \boldsymbol{v}=\begin{bmatrix}
                 v\\
                 \partial_\nu v
                \end{bmatrix},\quad 
\mathbb T^{(k)}=\begin{bmatrix}
                 T_{11}^{(k)} & T_{12}^{(k)}\\[5pt]
                 T_{21}^{(k)} & T_{22}^{(k)}
                \end{bmatrix}.
\]

The following trace theorem can be found in \cite[Theorem 1.1.6]{BS-FEM}.

\begin{lemma}\label{traceTheorem}
Let $\Omega$ be a Lipschitz domain. Then, there is a positive constant $C$ for which
\[
\|u\|_{L^2(\partial \Omega)}\leq C\|u\|_{L^2(\Omega)}^{1/2} \|u\|_{H^1(\Omega)}^{1/2}\quad
\forall\, u\in H^{1}(\Omega).
\]
\end{lemma}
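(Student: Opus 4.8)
The plan is to obtain the multiplicative trace inequality from the divergence theorem applied to $|u|^2\boldsymbol{\xi}$ for a suitable auxiliary vector field $\boldsymbol{\xi}$, and then to pass from smooth functions to all of $H^1(\Omega)$ by density. First I would reduce to the case $u\in C^1(\overline{\Omega})$: since $\Omega$ is a bounded Lipschitz domain, $C^1(\overline{\Omega})$ is dense in $H^1(\Omega)$, and the trace operator $H^1(\Omega)\to L^2(\partial\Omega)$ is bounded, so both sides of the claimed estimate depend continuously on $u$ in the $H^1(\Omega)$-norm; hence it suffices to prove the inequality for $u\in C^1(\overline{\Omega})$ and then take a limit over a sequence $u_j\in C^1(\overline{\Omega})$ with $u_j\to u$ in $H^1(\Omega)$.

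The core step is to construct a vector field $\boldsymbol{\xi}\in W^{1,\infty}(\Omega;\mathbb{R}^2)$ and a constant $c_0>0$ such that $\boldsymbol{\xi}\cdot\nu\geq c_0$ almost everywhere on $\partial\Omega$, where $\nu$ is the outward unit normal. Because $\partial\Omega$ is locally the graph of a Lipschitz function, on each coordinate patch the outward normal lies in a fixed open half-space, so a constant vector chosen from that half-space works locally; gluing these local choices with a smooth partition of unity subordinate to a finite cover of $\partial\Omega$ (and extending arbitrarily into the interior) produces the desired $\boldsymbol{\xi}$. With such a $\boldsymbol{\xi}$ fixed, the divergence theorem on the Lipschitz domain $\Omega$ gives, for $u\in C^1(\overline{\Omega})$,
\[
c_0\int_{\partial\Omega}|u|^2\,{\rm d}s\leq\int_{\partial\Omega}|u|^2\,\boldsymbol{\xi}\cdot\nu\,{\rm d}s=\int_{\Omega}|u|^2\,\nabla\cdot\boldsymbol{\xi}\,{\rm d}x+\int_{\Omega}\boldsymbol{\xi}\cdot\nabla(|u|^2)\,{\rm d}x.
\]
Using $|\nabla(|u|^2)|\leq 2|u|\,|\nabla u|$ together with the finiteness of $\|\boldsymbol{\xi}\|_{L^\infty}$ and $\|\nabla\cdot\boldsymbol{\xi}\|_{L^\infty}$ and the Cauchy--Schwarz inequality, the right-hand side is bounded by $C\big(\|u\|_{L^2(\Omega)}^2+\|u\|_{L^2(\Omega)}\|\nabla u\|_{L^2(\Omega)}\big)$.

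Finally I would absorb the quadratic term by $\|u\|_{L^2(\Omega)}\leq\|u\|_{H^1(\Omega)}$ and bound $\|\nabla u\|_{L^2(\Omega)}\leq\|u\|_{H^1(\Omega)}$, obtaining $\|u\|_{L^2(\partial\Omega)}^2\leq C\|u\|_{L^2(\Omega)}\|u\|_{H^1(\Omega)}$, and then take square roots to reach the stated bound; a density argument as above extends it to all $u\in H^1(\Omega)$. The only genuinely nontrivial point is the construction of $\boldsymbol{\xi}$ with a uniform positive lower bound on $\boldsymbol{\xi}\cdot\nu$ (and the validity of the divergence theorem on Lipschitz domains for such a field); everything afterwards is a routine integration by parts and Cauchy--Schwarz estimate. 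Since the result is quoted from \cite{BS-FEM}, one may alternatively just cite it, the argument above being the standard proof.
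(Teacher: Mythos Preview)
Your argument is correct and is precisely the standard proof of the multiplicative trace inequality (as given, for instance, in Grisvard or Brenner--Scott). The paper, however, does not prove this lemma at all: it simply states the result and cites \cite[Theorem 1.1.6]{BS-FEM}. So there is no proof in the paper to compare against; your proposal supplies the details that the paper omits by reference, and either citing the result or writing out the divergence-theorem argument you give would be acceptable here.
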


\begin{theorem}\label{MainResult1}
The variational problem \eqref{variationalTBC} has a unique weak solution $u\in H^2_{{\rm qp}, \Gamma_c}(\Omega)$ except for a discrete set of wavenumbers $\kappa$.
\end{theorem}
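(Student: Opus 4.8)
The plan is to apply the Fredholm alternative, which requires showing that the sesquilinear form $a(\cdot,\cdot)$ satisfies a G\r{a}rding inequality and that the associated solution operator is well-defined modulo a compact perturbation. First I would decompose $a(u,v) = a_1(u,v) - a_2(u,v)$, where
\[
a_1(u,v) = \int_{\Omega}\bigg[\mu\Delta u\Delta\bar v + (1-\mu)\sum_{i,j=1}^2 \frac{\partial^2 u}{\partial x_i\partial x_j}\frac{\partial^2\bar v}{\partial x_i\partial x_j}\bigg]\mathrm{d}x - \sum_{k=1}^2\int_{\Gamma_k}(\mathbb T^{(k)}_{\rm pos}\boldsymbol u)\cdot\overline{\boldsymbol v}\,\mathrm{d}s
\]
collects the principal second-order part together with the "good" part of the DtN operators, and $a_2$ collects the $\kappa^4 u\bar v$ term plus the finitely many Fourier modes that Lemma~\ref{PositiveDtN} excludes. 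The point of this split is that Lemma~\ref{PositiveTBC} tells us $-\Re\big\{\sum_k\int_{\Gamma_k}(\mathbb T^{(k)}\boldsymbol u)\cdot\overline{\boldsymbol u}\,\mathrm{d}s\big\} \geq -C\big(\|u\|_{L^2(\Gamma_k)}^2 + \|\partial_\nu u\|_{L^2(\Gamma_k)}^2\big)$, so the boundary terms cost at most a lower-order amount that can be absorbed.

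Next I would establish the G\r{a}rding inequality $\Re a(u,u) \geq c_1\|u\|_{H^2(\Omega)}^2 - c_2\|u\|_{H^1(\Omega)}^2$ for all $u\in H^2_{{\rm qp},\Gamma_c}(\Omega)$. The volume integrand $\mu|\Delta u|^2 + (1-\mu)\sum_{i,j}|\partial_{ij}u|^2$ is, by the algebraic identity $|\Delta u|^2 = \sum_{i}|\partial_{ii}u|^2 + 2\Re(\partial_{11}u\,\overline{\partial_{22}u})$ and the Cauchy inequality, bounded below by a positive multiple of $\sum_{i,j}|\partial_{ij}u|^2$ whenever $0\leq\mu<1$ (this is where the Poisson-ratio restriction is used); combined with the Poincar\'e-type control of lower derivatives by the full $H^2$ seminorm on $H^2_{{\rm qp},\Gamma_c}(\Omega)$ — which uses the homogeneous clamped data on $\Gamma_c$ to rule out the kernel of the second-derivative form — this controls $\|u\|_{H^2(\Omega)}^2$ from below. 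The $-\kappa^4\|u\|_{L^2(\Omega)}^2$ term and the boundary remainder from Lemma~\ref{PositiveTBC}, after invoking the trace Lemma~\ref{traceTheorem} and Young's inequality to write $\|u\|_{L^2(\Gamma_k)}^2 \lesssim \varepsilon\|u\|_{H^1(\Omega)}^2 + \varepsilon^{-1}\|u\|_{L^2(\Omega)}^2$ (and similarly for $\partial_\nu u$, controlled by $\|u\|_{H^2}$ with a small constant), are all dominated by $c_2\|u\|_{H^1(\Omega)}^2$ after the $H^2$ piece is partly absorbed. This yields the G\r{a}rding inequality, and boundedness of $a$ follows from Lemma~\ref{BoundTBC} together with the continuity of the trace $H^2(\Omega)\to H^{3/2}(\Gamma_k)\times H^{1/2}(\Gamma_k)$. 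The right-hand side $v\mapsto\int_{\Gamma_1}(p_1\bar v + p_2\partial_\nu\bar v)\,\mathrm{d}s$ is a bounded antilinear functional since $p_1,p_2$ are smooth.

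With G\r{a}rding's inequality in hand, I would write the problem as $(A + K)u = F$ on $H^2_{{\rm qp},\Gamma_c}(\Omega)$, where $A$ is the coercive (hence boundedly invertible, by Lax--Milgram) operator associated with $a_1$ shifted by $+c_2\|\cdot\|_{H^1}^2$, $K$ is the operator associated with $-c_2(u,v)_{H^1}$ together with the finite-rank boundary corrections and the $-\kappa^4 u\bar v$ term, and $F$ is the Riesz representative of the right-hand side. The operator $K$ is compact because the embedding $H^2(\Omega)\hookrightarrow H^1(\Omega)$ is compact (Rellich), the finitely many excluded Fourier modes give finite-rank contributions, and the $L^2(\Omega)$ pairing factors through the compact embedding $H^2\hookrightarrow L^2$. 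Hence $A+K$ is Fredholm of index zero, so existence is equivalent to uniqueness. The main obstacle is the uniqueness step: showing that, outside a discrete set of $\kappa$, the only solution of the homogeneous problem is $u\equiv 0$. Here I would argue that the family $\kappa\mapsto A(\kappa)+K(\kappa)$ is an analytic (in fact polynomial, after clearing the algebraic dependence on $\kappa$ through $\beta_n,\gamma_n$) family of Fredholm operators of index zero, invertible for at least one value of the parameter — for instance by a uniqueness argument at some admissible $\kappa$ using the outgoing/radiation structure encoded in the TBC (testing the homogeneous equation against $u$ itself, taking imaginary parts, and using that the propagating modes carry energy to infinity forces all radiating Fourier coefficients to vanish, after which unique continuation from $\Gamma_1$ into $\Omega$ finishes the job) — so by the analytic Fredholm theory the inverse exists except on a discrete set of $\kappa$ without finite accumulation point in the relevant parameter range. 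Verifying the hypotheses of the analytic Fredholm theorem (analyticity in $\kappa$ of the boundary operators, and non-triviality of the set of invertibility) is the delicate part; everything else is a routine assembly of the preceding lemmas.
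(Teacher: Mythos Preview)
Your approach is essentially the same as the paper's: establish continuity of $a(\cdot,\cdot)$ via Lemma~\ref{BoundTBC} and the trace theorem, prove a G\r{a}rding inequality by combining the volumetric coercivity of $\mu|\Delta u|^2+(1-\mu)\sum_{i,j}|\partial_{ij}u|^2$ with Lemma~\ref{PositiveTBC} and Lemma~\ref{traceTheorem} to absorb the boundary remainder into a lower-order term, and then invoke the Fredholm alternative.

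The one noteworthy difference is that you go further than the paper does. The paper's proof simply ends with ``which completes the proof by applying the Fredholm alternative theorem'' and does not spell out how the conclusion ``except for a discrete set of wavenumbers $\kappa$'' actually follows; your proposal correctly identifies that this requires analytic Fredholm theory together with invertibility at (at least) one parameter value, and you sketch the standard energy-flux/unique-continuation route to that invertibility. This extra layer is genuinely needed to justify the statement as written, so your outline is in fact more complete on this point than the paper's own proof. A small quibble: you do not need the clamped data on $\Gamma_c$ to get the volumetric G\r{a}rding estimate (the inequality $\mu|\Delta u|^2+(1-\mu)\sum_{i,j}|\partial_{ij}u|^2\geq(1-\mu)\sum_{i,j}|\partial_{ij}u|^2$ already controls the $H^2$-seminorm, and the full norm is recovered modulo $\|u\|_{L^2}^2$ without any boundary condition), so that remark about the Poincar\'e step can be dropped.
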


\begin{proof}
It follows from Lemma \ref{BoundTBC} and the trace theorem (cf. \cite[Lemmas 2.2 and 2.3]{BL-IP-2014}) that the continuity of the sesquilinear form \eqref{tbcauv} is evident. It can be shown from \cite{BCF-CMS-2019} that there exist  positive constants $c_1$ and $c_2$ such that 
\begin{equation}\label{atbcp}
	\int_{\Omega} \bigg[\mu|\Delta u|^2
		+(1-\mu)\sum\limits_{i,j=1}^2\bigg|\frac{\partial^2 u}{\partial x_i\partial x_j}\bigg|^2
			-\kappa^4 |u|^2\bigg]{\rm d}x\geq c_1\|u\|_{H^2(\Omega)}^2-c_2\|u\|_{L^2(\Omega)}^2.
\end{equation}
By combining Lemmas \ref{BoundTBC}, \ref{PositiveTBC}, and \ref{traceTheorem} with the Cauchy inequality, we deduce 
\begin{align*}
	-\Re \sum_{k=1}^2 \int_{\Gamma_k} (\mathbb T^{(k)}\boldsymbol{u})\cdot
 	\overline{\boldsymbol u}{\rm d}s
 &\geq -c_3\big(\|u\|^2_{L^2(\Gamma_1\cup\Gamma_2)}+\|\partial_{x_2} u\|^2_{L^2(\Gamma_1\cup\Gamma_2)}\big)\\
 &\geq -c_4\big(\|u\|_{L^2(\Omega)}\|u\|_{H^1(\Omega)}+\|\nabla u\|_{L^2(\Omega)}\|\nabla u\|_{H^1(\Omega)}\big)\\
 &\geq -c_5\|u\|^2_{H^1(\Omega)}-c_6\|u\|^2_{H^2(\Omega)},
\end{align*}
where $c_6>0$ is sufficiently small. Combining the above inequality with \eqref{atbcp}, we verify that the sesquilinear form \eqref{tbcauv} satisfies the G\r{a}rding inequality
\[
	\Re a(u,u)\geq c_7 \|u\|_{H^2(\Omega)}-c_8\|u\|_{H^1(\Omega)},
\]
which completes the proof by applying the Fredholm alternative theorem.
\end{proof}

\section{The PML problem}\label{Section5}

This section focuses on the PML problem, aiming to establish its well-posedness while also providing a convergence analysis of the PML solution. 

\subsection{The PML formulation}

Denote by $\Omega_k^{\rm PML}$, for $k=1, 2$, the PML regions above and below the interfaces $\Gamma_k$, respectively. Assume that the thickness of each region is $\delta_k$ and denote the outer boundary of $\Omega_k^{\rm PML}$ by $\Gamma_k^{\rm PML}$. Let $\Omega^{\rm PML}=\Omega\cup\Omega_1^{\rm PML}\cup\Omega_2^{\rm PML}$ be the domain in which the PML problem is formulated. The schematic of the PML problem is depicted in Figure \ref{pml}.

\begin{figure}
\centering
\includegraphics[width=0.25\textwidth]{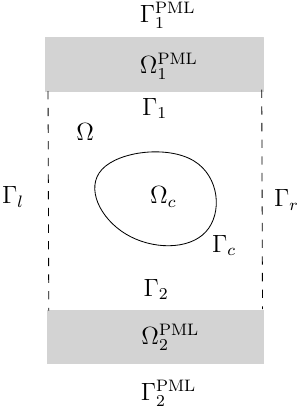}
\caption{The schematic of the PML problem.}
\label{pml}
\end{figure}

The PML parameters in $\Omega_k^{\rm PML}$ are introduced by the complex coordinate stretching (cf. \cite{TC-IEEE-1997}):
\[
\tilde{x}_2=\varphi(x_2)=\int_0^{x_2} s(t){\rm d}t,\quad s(t)=\left\{
\begin{aligned}
& 1 & &\text{if} ~ h_2\leq t\leq h_1,\\
& 1+{\rm i}\sigma_0\left(\frac{t-h_1}{\delta_1}\right)^m & &\text{if} ~  t>h_1,\\
& 1+{\rm i}\sigma_0\left(\frac{h_2-t}{\delta_2}\right)^m & &\text{if} ~ t<h_2,
\end{aligned}
\right.
\]
where $\sigma_0$ is a positive constant, and $m\geq 2$ is an integer.

Let $\tilde{u}$ be the solution of the total field for the PML problem in the complex coordinates $\tilde x=(x_1, \tilde x_2)$. It satisfies
\begin{equation}\label{MBHE}
\left\{
\begin{aligned}
&\tilde{\Delta}^2\tilde{u}-\kappa^4\tilde{u}=w & &{\rm in} ~ \Omega^{\rm PML},\\
&\tilde{u}=u^i,\quad\partial_{\nu}\tilde{u}=\partial_{\nu} u^i & &{\rm on} ~ \Gamma_1^{\rm PML},\\
&\tilde{u}=0,\quad\partial_{\nu}\tilde{u}=0 & &{\rm on} ~ \Gamma_2^{\rm PML},\\
\end{aligned}
\right.
\end{equation}
where $\tilde{\Delta}$ is the Laplacian operator in the complex coordinates and is given by 
\[
\tilde{\Delta}\tilde{u}=\frac{\partial^2 \tilde{u}}{\partial x_1^2}+\frac{\partial^2 \tilde{u}}{\partial \tilde{x}_2^2},
\]
and 
\[
w(\tilde x)=\left\{
\begin{aligned}
& \tilde{\Delta}^2 u^i-\kappa^4 u^i & &\text{in} ~ \Omega_1^{\rm PML},\\
& 0  & &\text{in} ~ \Omega\setminus\overline{\Omega_1^{\rm PML}}. 
\end{aligned}
\right.
\]

Here, the complex coordinate variable $\tilde x=(x_1, \tilde{x}_2)$ is considered to be in $\Omega^{\rm PML}_k$ or on $\Gamma_k^{\rm PML}$ if $x=(x_1, \psi(\tilde{x}_2))$ is in $\Omega^{\rm PML}_k$ or on $\Gamma_k^{\rm PML}$, where $\psi$ is the inverse function of $\varphi$, i.e., $x_2=\psi(\tilde x_2)=\varphi^{-1}(\tilde x_2)$.

\subsection{TBC for the PML problem}

To equivalently formulate the PML problem from the domain $\Omega^{\rm PML}$ to the domain $\Omega$, we investigate the TBCs of the PML problem \eqref{MBHE} on the interfaces $\Gamma_k$. 

First, we deduce the TBC on $\Gamma_1$. Consider the scattered field 
\[
	\tilde{u}^s(x_1, \varphi(x_2))=\tilde{u}(x_1, \varphi(x_2))-u^i(x_1, x_2). 
\]
It follows from \eqref{MBHE} that the scattered field $\tilde{u}^s$ satisfies
\begin{equation}\label{PMLOmega1}
\left\{
\begin{aligned}
&\tilde{\Delta}\tilde{u}^s-\kappa^4\tilde{u}^s=0 & &{\rm in} ~ \Omega_1^{\rm PML},\\
&\tilde{u}^s=\hat{f}_1,\quad\partial_{\nu}\tilde{u}^s=\hat{g}_1 & &{\rm on} ~ \Gamma_1,\\
&\tilde{u}^s=0,\quad\partial_{\nu}\tilde{u}^s=0 & &{\rm on} ~ \Gamma_1^{\rm PML},\\
\end{aligned}
\right.
\end{equation}
where $(\tilde{u}^s, \partial_{\nu} \tilde{u}^s)=(\hat{f}_1, \hat{g}_1)$ are the Dirichlet data for the scattered field $\tilde{u}^s$ on $\Gamma_1$. As $\hat f_1$ and $\hat g_1$ are quasi-periodic functions of $x_1$ with a period $\Lambda$, they have the Fourier series expansions
\[
\hat{f}_1(x_1)=\sum\limits_{n\in\mathbb{Z}} \hat{f}_1^{(n)}e^{{\rm i}\alpha_n x_1},\quad
\hat{g}_1(x_1)=\sum\limits_{n\in\mathbb{Z}} \hat{g}_1^{(n)}e^{{\rm i}\alpha_n x_1}.
\]

Since the scattered field $\tilde u^s$ satisfies $\tilde{\Delta}\tilde{u}^s-\kappa^4\tilde{u}^s=0$ in $\Omega_1^{\rm PML}$, we can verify that $\tilde u^s$ admits the following analytical expression in $\Omega_1^{\rm PML}$:
 \begin{align}
\tilde{u}^s(x_1, \tilde{x}_2)&=\sum\limits_{n\in\mathbb{Z}}\Big[
	W_1^{(n)}e^{-{\rm i}\beta_n(\tilde{x}_2-h_1)}+V_1^{(n)}e^{{\rm i}\beta_n(\tilde{x}_2-h_1)}\notag\\
	&\quad +X_1^{(n)}e^{\gamma_n(\tilde{x}_2-h_1)}	+Y_1^{(n)}e^{-\gamma_n(\tilde{x}_2-h_1)}\Big]e^{{\rm i}\alpha_n x_1}\label{uMBHE1}.
\end{align}
Substituting \eqref{uMBHE1} into the boundary conditions in \eqref{PMLOmega1}, we obtain a linear system of algebraic equations for the Fourier coefficients $W_1^{(n)}, V_1^{(n)}, X_1^{(n)}$, and $Y_1^{(n)}$:
\begin{equation}\label{System1}
\left\{
\begin{aligned}
& W_1^{(n)}+V_1^{(n)}+X_1^{(n)}+Y_1^{(n)}=\hat{f}_1^{(n)},\\	
& -{\rm i}\beta_n W_1^{(n)}+{\rm i}\beta_n V_1^{(n)}+\gamma_n X_1^{(n)}-\gamma_n Y_1^{(n)}=\hat{g}_1^{(n)},\\
& W_1^{(n)}e^{-{\rm i}\beta_n \tilde{h}_1}+V_1^{(n)}e^{{\rm i}\beta_n \tilde{h}_1}
+X_1^{(n)}e^{\gamma_n \tilde{h}_1}+Y_1^{(n)}e^{-\gamma_n \tilde{h}_1}=0,\\
& -{\rm i}\beta_n W_1^{(n)}e^{-{\rm i}\beta_n \tilde{h}_1}+{\rm i}\beta_n V_1^{(n)}e^{{\rm i}\beta_n \tilde{h}_1}+\gamma_n X_1^{(n)}e^{\gamma_n \tilde{h}_1}-\gamma_n Y_1^{(n)}e^{-\gamma_n \tilde{h}_1}=0,
\end{aligned}
\right.
\end{equation}
where
\begin{equation}\label{h1}
	\tilde{h}_1=\varphi(h_1+\delta_1)-h_1=\int_0^{h_1+\delta_1} s(t){\rm d}t-h_1
	=\delta_1\Big(1+\frac{{\rm i}\sigma_0}{m+1}\Big).
\end{equation}

Through tedious yet straightforward calculations, we solve the linear system \eqref{System1} and obtain the solutions
for the Fourier coefficients:
\begin{align*}
W_1^{(n)}&=\frac{1}{D^{(n)}_1}\bigg\{\hat{f}_1^{(n)}\Big[\left(\gamma_n\beta_n
	-{\rm i}\gamma_n^2\right)e^{2{\rm i}\beta_n\tilde{h}_1}
	-2\beta_n\gamma_n e^{({\rm i}\beta_n+\gamma_n) \tilde{h}_1}\\
	&\quad +\left(\gamma_n\beta_n+{\rm i}\gamma_n^2\right)e^{2({\rm i}\beta_n+\gamma_n) \tilde{h}_1}
	 \Big]
	 -\hat{g}_1^{(n)}\Big[\left(\beta_n-{\rm i}\gamma_n\right)e^{2{\rm i}\beta_n\tilde{h}_1}\\
&\quad	 +2{\rm i}\gamma_n e^{({\rm i}\beta_n+\gamma_n) \tilde{h}_1}
	 -\left({\rm i}\gamma_n+\beta_n\right)e^{2({\rm i}\beta_n+\gamma_n) \tilde{h}_1}\Big]\bigg\},\\
V_1^{(n)}&=\frac{1}{D^{(n)}_1}\bigg\{\hat{f}_1^{(n)}\left[\left(\beta_n\gamma_n+{\rm i}\gamma_n^2\right)
	+\left(\gamma_n\beta_n-{\rm i}\gamma_n^2\right)e^{2\gamma_n\tilde{h}_1}
	-2\beta_n\gamma_n e^{({\rm i}\beta_n+\gamma_n) \tilde{h}_1}
	 \right]\\
	 &\quad-\hat{g}_1^{(n)}\left[\left(\beta_n+{\rm i}\gamma_n\right)-\left(\beta_n-{\rm i}\gamma_n\right)e^{2\gamma_n\tilde{h}_1}
	 -2{\rm i}\gamma_n e^{({\rm i}\beta_n+\gamma_n) \tilde{h}_1}
	\right]\bigg\},\\
X_1^{(n)}&=\frac{1}{D^{(n)}_1}\bigg\{\hat{f}_1^{(n)}\left[(\beta_n\gamma_n-{\rm i}\beta_n^2)
	+\left({\rm i}\beta_n^2+\beta_n\gamma_n\right)e^{2{\rm i}\beta_n\tilde{h}_1}
	-2\beta_n\gamma_n e^{({\rm i}\beta_n+\gamma_n) \tilde{h}_1}
	 \right]\\
	 &\quad-\hat{g}_1^{(n)}\left[-\left(\beta_n+{\rm i}\gamma_n\right)
	 	-\left(\beta_n-{\rm i}\gamma_n\right)e^{2{\rm i}\beta_n\tilde{h}_1}
	 +2\beta_n e^{({\rm i}\beta_n+\gamma_n) \tilde{h}_1}
	\right]\bigg\},\\
Y_1^{(n)}&=\frac{1}{D^{(n)}_1}\bigg\{\hat{f}_1^{(n)}\Big[
		\left({\rm i}\beta_n^2+\beta_n\gamma_n\right)e^{2\gamma_n\tilde{h}_1}
		-2\beta_n\gamma_n e^{({\rm i}\beta_n+\gamma_n) \tilde{h}_1}\\
&\quad		+\left(\beta_n\gamma_n-{\rm i}\beta_n^2\right)e^{2({\rm i}\beta_n+\gamma_n) \tilde{h}_1}
	 \Big]-\hat{g}_1^{(n)}\Big[\left(\beta_n-{\rm i}\gamma_n\right)e^{2\gamma_n\tilde{h}_1}\\
&\quad	 -2\beta_n e^{({\rm i}\beta_n+\gamma_n) \tilde{h}_1}
	 +\left({\rm i}\gamma_n+\beta_n\right)e^{2({\rm i}\beta_n+\gamma_n) \tilde{h}_1}\Big]\bigg\}, 
\end{align*}
where the denominator $D^{(n)}_1$ is defined as 
\begin{align}\label{Denomiantor1}
D^{(n)}_1&=-8\beta_n\gamma_n e^{({\rm i}\beta_n+\gamma_n)\tilde{h}_1}+\left(-{\rm i}\beta_n^2+{\rm i}\gamma_n^2+2\beta_n\gamma_n\right)\big(1+e^{2({\rm i}\beta_n+\gamma_n)\tilde{h}_1}\big)\notag\\ 
&\quad +\left({\rm i}\beta_n^2-{\rm i}\gamma_n^2+2\beta_n\gamma_n\right)\big(e^{2{\rm i}\beta_n \tilde{h}_1}+e^{2\gamma_n\tilde{h}_1}\big).
\end{align}

It is clear to note from \eqref{h1} that both of the real and imaginary parts of $\tilde{h}_1$ are positive.  
If the thickness of the layer $\delta_1$ is sufficiently large, i.e., $\Re\tilde h_1=\delta_1$ is sufficiently large, then the leading term in \eqref{Denomiantor1} is the one containing $e^{2\gamma_n \tilde{h}_1}$. A simple calculations yields  
\[
	\left|{\rm i}\beta_n^2-{\rm i}\gamma_n^2+2\beta_n\gamma_n\right|^2
	=|\gamma_n+{\rm i}\beta_n|^4\neq 0\quad \forall \, n\in\mathbb Z,
\]
which ensures that $D^{(n)}_1$ is non-zero for sufficiently large $\delta_1$.

Substituting \eqref{uMBHE1} into \eqref{MNupper}, we obtain the TBC of the scattered field $\tilde u^s$ for the PML problem on $\Gamma_1$:
\begin{equation}\label{PMLGamma1us}
N_1\tilde{u}^s=\hat{T}_{11}^{(1)} \hat{f}_1+\hat{T}_{12}^{(1)}\hat{g}_1,\quad 
M_1\tilde{u}^s=\hat{T}_{21}^{(1)}\hat{f}_1+\hat{T}_{22}^{(1)}\hat{g}_1,
\end{equation}
where the DtN operators $\hat{T}_{ij}^{(1)}, i,j=1,2$ are given by 
\begin{align*}
(\hat{T}^{(1)}_{11} \hat{f}_1)(x_1)&= -\sum\limits_{n\in\mathbb{Z}}\frac{e^{{\rm i}\alpha_n x_1}}{D^{(n)}_1}\hat{f}_1^{(n)}\Big\{-\left({\rm i}\beta_n^4\gamma_n-\beta_n^3\gamma_n^2+{\rm i}\beta_n^2\gamma_n^3-\beta_n\gamma_n^4\right)\\
&\quad +\left({\rm i}\beta_n^4\gamma_n+\beta_n^3\gamma_n^2+{\rm i}\beta_n^2\gamma_n^3+\beta_n\gamma_n^4\right)	 
e^{2{\rm i}\beta_n\tilde{h}_1} \\
& \quad -\left({\rm i}\beta_n^4\gamma_n+\beta_n^3\gamma_n^2+{\rm i}\beta_n^2\gamma_n^3 \beta_n\gamma_n^4\right)e^{2\gamma_n\tilde{h}_1}\\
&\quad +\left({\rm i}\beta_n^4\gamma_n-\beta_n^3\gamma_n^2+{\rm i}\beta_n^2\gamma_n^3-\beta_n\gamma_n^4\right)
e^{2({\rm i}\beta_n+\gamma_n) \tilde{h}_1} \Big\},
\end{align*}
\begin{align*}
(\hat{T}^{(1)}_{12} \hat{g}_1)(x_1) &=-\sum\limits_{n\in\mathbb{Z}}\frac{e^{{\rm i}\alpha_n x_1}}{D^{(n)}_1}\hat{g}_1^{(n)}\bigg\{\Big(\frac{\rm i}{2}\mu\beta_n^4+(1-\mu)\beta_n^3\gamma_n+(2-\mu){\rm i}\beta_n^2\gamma_n^2\\
&\quad -(1-\mu)\beta_n\gamma_n^3+\frac{\mu}{2}{\rm i}\gamma_n^4\Big)
 +\Big(-\frac{\rm i}{2}\mu\beta_n^4+(1-\mu)\beta_n^3\gamma_n-(2-\mu){\rm i}\beta_n^2\gamma_n^2\\
&\quad -(1-\mu)\beta_n\gamma_n^3 -\frac{\rm i}{2}\mu\gamma_n^4\Big)e^{2{\rm i}\beta_n\tilde{h}_1}
  +\Big(-\frac{\rm i}{2}\mu\beta_n^4+(1-\mu)\beta_n^3\gamma_n\\
 &\quad -(2-\mu){\rm i}\beta_n^2\gamma_n^2
 -(1-\mu)\beta_n\gamma_n^3 -\frac{\rm i}{2}\mu\gamma_n^4\Big)e^{2\gamma_n\tilde{h}_1}\\
&\quad +(4-4\mu)\left(\beta_n\gamma_n^3-\beta_n^3\gamma_n\right)e^{({\rm i}\beta_n+\gamma_n) \tilde{h}_1}
+\Big(\frac{\rm i}{2}\mu\beta_n^4+(1-\mu)\beta_n^3\gamma_n\\
&\quad +(2-\mu){\rm i}\beta_n^2\gamma_n^2 -(1-\mu)\beta_n\gamma_n^3+\frac{\mu}{2}{\rm i}\gamma_n^4\Big)e^{2({\rm i}\beta_n+\gamma_n) \tilde{h}_1}\bigg\}, 
\end{align*}
\begin{align*}
(\hat{T}_{21}^{(1)}\hat{f}_1)(x_1)&=-\sum\limits_{n\in\mathbb{Z}}\frac{e^{{\rm i}\alpha_n x_1}}{D^{(n)}_1}\hat{f}_1^{(n)}\bigg\{\Big(\frac{\rm i}{2}\mu\beta_n^4+(1-\mu)\beta_n^3\gamma_n+(2-\mu){\rm i}\beta_n^2\gamma_n^2\\
&\quad -(1-\mu)\beta_n\gamma_n^3+\frac{\rm i}{2}\mu\gamma_n^4\Big)
 +\Big(-\frac{\rm i}{2}\mu\beta_n^4+(1-\mu)\beta_n^3\gamma_n-(2-\mu){\rm i}\beta_n^2\gamma_n^2\\
&\quad -(1-\mu)\beta_n\gamma_n^3-\frac{\rm i}{2}\mu\gamma_n^4\Big)e^{2{\rm i}\beta_n\tilde{h}_1}
 +\Big(-\frac{\rm i}{2}\mu\beta_n^4+(1-\mu)\beta_n^3\gamma_n\\
 &\quad -(2-\mu){\rm i}\beta_n^2\gamma_n^2
-(1-\mu)\beta_n\gamma_n^3-\frac{\rm i}{2}\mu\gamma_n^4\Big)e^{2\gamma_n\tilde{h}_1}\\
&\quad +(4-4\mu)\left(\gamma_n^3\beta_n-\beta_n^3\gamma_n\right)e^{({\rm i}\beta_n+\gamma_n) \tilde{h}_1}
 +\Big(\frac{\rm i}{2}\mu\beta_n^4+(1-\mu)\beta_n^3\gamma_n\\
&\quad +(2-\mu){\rm i}\beta_n^2\gamma_n^2
-(1-\mu)\beta_n\gamma_n^3+\frac{\rm i}{2}\mu\gamma_n^4\Big)e^{2({\rm i}\beta_n+\gamma_n) \tilde{h}_1}\bigg\},
\end{align*}
and
\begin{align*}
(\hat{T}_{22}^{(1)}\hat{g}_1)(x_1)&=-\sum\limits_{n\in\mathbb{Z}}\frac{e^{{\rm i}\alpha_n x_1}}{D^{(n)}_1}\hat{g}_1^{(n)}\Big\{ -\left(\beta_n^3+{\rm i}\beta_n^2\gamma_n+\beta_n\gamma_n^2+{\rm i}\gamma_n^3\right)	\\
&\quad -\left(\beta_n^3-{\rm i}\beta_n^2\gamma_n+\beta_n\gamma_n^2-{\rm i}\gamma_n^3\right)e^{2{\rm i}\beta_n\tilde{h}_1}\\
&\quad +\left(\beta_n^3-{\rm i}\beta_n^2\gamma_n+\beta_n\gamma_n^2-{\rm i}\gamma_n^3\right)e^{2\gamma_n\tilde{h}_1}\\
&\quad +\left(\beta_n^3+{\rm i}\beta_n^2\gamma_n+\beta_n\gamma_n^2+{\rm i}\gamma_n^3\right)	
e^{2({\rm i}\beta_n+\gamma_n) \tilde{h}_1}\Big\}.
\end{align*}

Let $(\tilde u, \partial_\nu\tilde u)=(f_1, g_1)$ be the Dirichlet data of the total field $\tilde u$ on $\Gamma_1$. Utilizing \eqref{PMLGamma1us} and noting $f_1=\hat f_1+u^i$ and $g_1=\hat g_1+\partial_{x_2}u^i$, the TBC for the total field $\tilde{u}$ on $\Gamma_1$ can be formulated as 
 \begin{equation}\label{DtNGamma1PML}
N_1\tilde{u}=\hat{T}_{11}^{(1)} f_1+\hat{T}_{12}^{(1)}g_1+\hat{p}_1,\quad 
M_1\tilde{u}=\hat{T}_{21}^{(1)}f_1+\hat{T}_{22}^{(1)}g_1+\hat{p}_2, 
\end{equation}
where
\begin{equation}\label{hp12}
\hat{p}_1(x_1) =N_1 u^i-\hat{T}_{11}^{(1)} u^i-\hat{T}_{12}^{(1)}\partial_{x_2} u^i,\quad 
\hat{p}_2(x_1) =M_1 u^i-\hat{T}_{21}^{(1)} u^i-\hat{T}_{22}^{(1)}\partial_{x_2} u^i.
\end{equation}

The TBC can be similarly deduced on $\Gamma_2$. Derived from the biharmonic wave equation in \eqref{MBHE}, the PML solution $\tilde{u}$ exhibits the following analytical expansion in $\Omega_2^{\rm PML}$:
\begin{align}\label{uMBHE2}
\tilde{u}(x_1, \tilde{x}_2)&=\sum\limits_{n\in\mathbb{Z}}\Big[
	W_2^{(n)}e^{-{\rm i}\beta_n(\tilde{x}_2-h_2)}+V_2^{(n)}e^{{\rm i}\beta_n(\tilde{x}_2-h_2)}\notag\\
&\quad 	+X_2^{(n)}e^{\gamma_n(\tilde{x}_2-h_2)} +Y_2^{(n)}e^{-\gamma_n(\tilde{x}_2-h_2)}\Big] e^{{\rm i}\alpha_n x_1}.
\end{align}
Substituting \eqref{uMBHE2} into the boundary conditions in \eqref{MBHE}, we obtain a linear system for the Fourier coefficients $W_2^{(n)}, V_2^{(n)}, X_2^{(n)}$, and $Y_2^{(n)}$:
\begin{equation}\label{PML2}
\left\{
\begin{aligned}
& W_2^{(n)}+V_2^{(n)}+X_2^{(n)}+Y_2^{(n)}=f_2^{(n)},\\
& -{\rm i}\beta_n W_2^{(n)}+{\rm i}\beta_n V_2^{(n)} +\gamma_n X_2^{(n)}-\gamma_n Y_2^{(n)}=-g_2^{(n)},\\
& W_2^{(n)}e^{-{\rm i}\beta_n \tilde{h}_2}+V_2^{(n)}e^{{\rm i}\beta_n \tilde{h}_2} +X_2^{(n)}e^{\gamma_n \tilde{h}_2}+ Y_2^{(n)}e^{-\gamma_n \tilde{h}_2}=0,\\
& -{\rm i}\beta_n W_2^{(n)}e^{-{\rm i}\beta_n \tilde{h}_2}+{\rm i}\beta_n V_2^{(n)}e^{{\rm i}\beta_n \tilde{h}_2} +\gamma_n X_2^{(n)}e^{\gamma_n \tilde{h}_2}-\gamma_n Y_2^{(n)}e^{-\gamma_n \tilde{h}_2}=0,
\end{aligned}
\right.
\end{equation}
where $(\tilde{u}, \partial_{\nu} \tilde{u})=(f_2, g_2)$ represent the Dirichlet data of the total field on $\Gamma_2$ and have the Fourier series expansions
\[
	f_2=\sum\limits_{n\in\mathbb{Z}} f_2^{(n)}e^{{\rm i}\alpha_n x_1},\quad
	g_2=\sum\limits_{n\in\mathbb{Z}} g_2^{(n)}e^{{\rm i}\alpha_n x_1}, 
\]
and
\begin{equation}\label{h2}
	\tilde{h}_2:=\varphi(h_2-\delta_2)-h_2=\int_0^{h_2-\delta_2} s(t){\rm d}t-h_2
	=-\delta_2\left(1+\frac{{\rm i}\sigma_0}{m+1}\right).
\end{equation}

Upon solving the linear system \eqref{PML2}, we have 
\begin{align*}
W_2^{(n)}&=\frac{1}{D^{(n)}_2}\bigg\{f_2^{(n)}\Big[\left(\gamma_n\beta_n-{\rm i}\gamma_n^2\right)
        e^{2{\rm i}\beta_n\tilde{h}_2} -2\beta_n\gamma_n e^{({\rm i}\beta_n+\gamma_n) \tilde{h}_2}\\
&\quad +\left(\gamma_n\beta_n+{\rm i}\gamma_n^2\right)e^{2({\rm i}\beta_n+\gamma_n) \tilde{h}_2}
	 \Big]
	 +g_2^{(n)}\Big[\left(\beta_n-{\rm i}\gamma_n\right)e^{2{\rm i}\beta_n\tilde{h}_2}\\
&\quad +2{\rm i}\gamma_ne^{({\rm i}\beta_n+\gamma_n) \tilde{h}_2}
	 -\left({\rm i}\gamma_n+\beta_n\right)e^{2({\rm i}\beta_n+\gamma_n) \tilde{h}_2}\Big]\bigg\},\\
\end{align*}
\begin{align*}
	 V_2^{(n)}&=\frac{1}{D^{(n)}_2}\bigg\{f_2^{(n)}\left[\left(\beta_n\gamma_n+{\rm i}\gamma_n^2\right)
	+\left(\gamma_n\beta_n-{\rm i}\gamma_n^2\right)e^{2\gamma_n\tilde{h}_2}
	-2\beta_n\gamma_n e^{({\rm i}\beta_n+\gamma_n) \tilde{h}_2}\right]\\
	 &\quad+g_2^{(n)}\left[\left(\beta_n+{\rm i}\gamma_n\right)-\left(\beta_n-{\rm i}\gamma_n\right)e^{2\gamma_n\tilde{h}_2}-2{\rm i}\gamma_n e^{({\rm i}\beta_n+\gamma_n) \tilde{h}_2}
	\right]\bigg\},
\end{align*}
\begin{align*}
X_2^{(n)}&=\frac{1}{D^{(n)}_2}\bigg\{f_2^{(n)}\left[\left(\beta_n\gamma_n-{\rm i}\beta_n^2\right)
	+\left({\rm i}\beta_n^2+\beta_n\gamma_n\right)e^{2{\rm i}\beta_n\tilde{h}_2}
	-2\beta_n\gamma_n e^{({\rm i}\beta_n+\gamma_n) \tilde{h}_2}\right]\\
	 &\quad+g_2^{(n)}\left[-\left(\beta_n+{\rm i}\gamma_n\right)
	 	-\left(\beta_n-{\rm i}\gamma_n\right)e^{2{\rm i}\beta_n\tilde{h}_2}
	 +2\beta_n e^{({\rm i}\beta_n+\gamma_n) \tilde{h}_2}\right]\bigg\},
\end{align*}
\begin{align*}
Y_2^{(n)}&=\frac{1}{D^{(n)}_2}\bigg\{f_2^{(n)}\Big[
		\left({\rm i}\beta_n^2+\beta_n\gamma_n\right)e^{2\gamma_n\tilde{h}_2}
		-2\beta_n\gamma_n e^{({\rm i}\beta_n+\gamma_n) \tilde{h}_2}\\
&\quad +\left(\beta_n\gamma_n-{\rm i}\beta_n^2\right)e^{2({\rm i}\beta_n+\gamma_n) \tilde{h}_2}
	 \Big]+g_2^{(n)}\Big[\left(\beta_n-{\rm i}\gamma_n\right)e^{2\gamma_n\tilde{h}_2}\\
&\quad -2\beta_ne^{({\rm i}\beta_n+\gamma_n) \tilde{h}_2}
	 +\left({\rm i}\gamma_n+\beta_n\right)e^{2({\rm i}\beta_n+\gamma_n) \tilde{h}_2}\Big]\bigg\},
\end{align*}
where the denominator $D^{(n)}_2$ is given by 
\begin{eqnarray}\label{Denomiantor2}
D^{(n)}_2 &=-8\beta_n\gamma_n e^{({\rm i}\beta_n+\gamma_n)\tilde{h}_2} + \left(-{\rm i}\beta_n^2+{\rm i}\gamma_n^2+2\beta_n\gamma_n\right)\big(1+e^{2({\rm i}\beta_n+\gamma_n)\tilde{h}_2}\big)\notag\\
& \quad +\left({\rm i}\beta_n^2-{\rm i}\gamma_n^2+2\beta_n\gamma_n\right)\big(e^{2{\rm i}\beta_n \tilde{h}_2}+e^{2\gamma_n\tilde{h}_2}\big).
\end{eqnarray}

By \eqref{h2}, both of the real and imaginary parts of $\tilde{h}_2$ are negative. If the thickness of the layer $
\delta_2$ is sufficiently large, i.e., $-\Re\tilde h_2=\delta_2$ is sufficiently large, then the leading term in \eqref{Denomiantor2} is the one containing $e^{2{\rm i}\beta_n \tilde{h}_2}$. Observing 
$|{\rm i}\beta_n^2-{\rm i}\gamma_n^2+2\beta_n\gamma_n|^2 =|\gamma_n+{\rm i}\beta_n|^4\neq 0$ for any $n\in\mathbb Z$, we 
deduce that the denominator $D^{(2)}_n$ in non-zero for sufficiently large $\delta_2$.

Substituting \eqref{uMBHE2} into \eqref{MNlower}, we obtain the TBC for the PML problem on $\Gamma_2$:
\begin{equation}\label{PMLGamma2}
N_2\tilde{u}=\hat{T}_{11}^{(2)} f_2+\hat{T}_{12}^{(2)}g_2,\quad 
M_2\tilde{u}=\hat{T}_{21}^{(2)}f_2+\hat{T}_{22}^{(2)}g_2,
\end{equation}
where the DtN operators $\hat{T}_{ij}^{(2)}, i,j=1, 2$ are defined as 
\begin{align*}
(\hat{T}^{(2)}_{11} f_2)(x_1)&= -\sum\limits_{n\in\mathbb{Z}}\frac{e^{{\rm i}\alpha_n x_1}}{D^{(2)}_n}f_2^{(n)}
\Big\{\left({\rm i}\beta_n^4\gamma_n-\beta_n^3\gamma_n^2+{\rm i}\beta_n^2\gamma_n^3-\beta_n\gamma_n^4\right)\\
&\quad-\left({\rm i}\beta_n^4\gamma_n+\beta_n^3\gamma_n^2+{\rm i}\beta_n^2\gamma_n^3+\beta_n\gamma_n^4\right)	 
e^{2{\rm i}\beta_n\tilde{h}_2}\\
&\quad +\left({\rm i}\beta_n^4\gamma_n+\beta_n^3\gamma_n^2+{\rm i}\beta_n^2\gamma_n^3+\beta_n\gamma_n^4\right)	 
e^{2\gamma_n\tilde{h}_2}\\
&\quad -\left({\rm i}\beta_n^4\gamma_n-\beta_n^3\gamma_n^2+{\rm i}\beta_n^2\gamma_n^3-\beta_n\gamma_n^4\right)
e^{2({\rm i}\beta_n+\gamma_n) \tilde{h}_2}\Big\},
\end{align*}
\begin{align*}
(\hat{T}^{(2)}_{12} g_2)(x_1) &=-\sum\limits_{n\in\mathbb{Z}}\frac{e^{{\rm i}\alpha_n x_1}}{D^{(2)}_n}g_2^{(n)}
\bigg\{\Big(\frac{\rm i}{2}\mu\beta_n^4+(1-\mu)\beta_n^3\gamma_n+(2-\mu){\rm i}\beta_n^2\gamma_n^2\\
&\quad -(1-\mu)\beta_n\gamma_n^3+\frac{\mu}{2}{\rm i}\gamma_n^4\Big)
 +\Big(-\frac{\rm i}{2}\mu\beta_n^4+(1-\mu)\beta_n^3\gamma_n-(2-\mu){\rm i}\beta_n^2\gamma_n^2\\
&\quad -(1-\mu)\beta_n\gamma_n^3 -\frac{\rm i}{2}\mu\gamma_n^4\Big)e^{2{\rm i}\beta_n\tilde{h}_2} +\Big(-\frac{\rm i}{2}\mu\beta_n^4+(1-\mu)\beta_n^3\gamma_n \\
&\quad -(2-\mu){\rm i}\beta_n^2\gamma_n^2 -(1-\mu)\beta_n\gamma_n^3 -\frac{\rm i}{2}\mu\gamma_n^4\Big)e^{2\gamma_n\tilde{h}_2}\\
&\quad +(4-4\mu)\left(\beta_n\gamma_n^3-\beta_n^3\gamma_n\right)e^{({\rm i}\beta_n+\gamma_n) \tilde{h}_2}
 +\Big(\frac{\rm i}{2}\mu\beta_n^4+(1-\mu)\beta_n^3\gamma_n\\
 &\quad +(2-\mu){\rm i}\beta_n^2\gamma_n^2
-(1-\mu)\beta_n\gamma_n^3+\frac{\mu}{2}{\rm i}\gamma_n^4\Big)e^{2({\rm i}\beta_n+\gamma_n) \tilde{h}_2}\bigg\},
\end{align*}
\begin{align*}
(\hat{T}_{21}^{(2)}(n)f_2)(x_1)&=-\sum\limits_{n\in\mathbb{Z}}\frac{e^{{\rm i}\alpha_n x_1}}{D^{(2)}_n}f_2^{(n)}
\bigg\{\Big(\frac{\rm i}{2}\mu\beta_n^4+(1-\mu)\beta_n^3\gamma_n+(2-\mu){\rm i}\beta_n^2\gamma_n^2\\
&\quad -(1-\mu)\beta_n\gamma_n^3+\frac{\rm i}{2}\mu\gamma_n^4\Big) +\Big(-\frac{\rm i}{2}\mu\beta_n^4+(1-\mu)\beta_n^3\gamma_n-(2-\mu){\rm i}\beta_n^2\gamma_n^2\\
&\quad -(1-\mu)\beta_n\gamma_n^3-\frac{\rm i}{2}\mu\gamma_n^4\Big)e^{2{\rm i}\beta_n\tilde{h}_2}
+\Big(-\frac{\rm i}{2}\mu\beta_n^4+(1-\mu)\beta_n^3\gamma_n\\
&\quad -(2-\mu){\rm i}\beta_n^2\gamma_n^2 -(1-\mu)\beta_n\gamma_n^3-\frac{\rm i}{2}\mu\gamma_n^4\Big)e^{2\gamma_n\tilde{h}_2}\\
&\quad+(4-4\mu)\left(\gamma_n^3\beta_n-\beta_n^3\gamma_n\right)e^{({\rm i}\beta_n+\gamma_n) \tilde{h}_2}
+\Big(\frac{\rm i}{2}\mu\beta_n^4+(1-\mu)\beta_n^3\gamma_n\\
&\quad +(2-\mu){\rm i}\beta_n^2\gamma_n^2 -(1-\mu)\beta_n\gamma_n^3+\frac{\rm i}{2}\mu\gamma_n^4\Big)e^{2({\rm i}\beta_n+\gamma_n)\tilde{h}_2}\bigg\},
\end{align*}
and
\begin{align*}
(\hat{T}_{22}^{(2)}(n)g_2)(x_1)&=-\sum\limits_{n\in\mathbb{Z}}\frac{e^{{\rm i}\alpha_n x_1}}{D^{(2)}_n}g_2^{(n)}
\Big\{\left(\beta_n^3+{\rm i}\beta_n^2\gamma_n+\beta_n\gamma_n^2+{\rm i}\gamma_n^3\right)\\
&\quad +\left(\beta_n^3-{\rm i}\beta_n^2\gamma_n+\beta_n\gamma_n^2-{\rm i}\gamma_n^3\right)e^{2{\rm i}\beta_n\tilde{h}_2}\\
&\quad-\left(\beta_n^3-{\rm i}\beta_n^2\gamma_n+\beta_n\gamma_n^2-{\rm i}\gamma_n^3\right)e^{2\gamma_n\tilde{h}_2}\\
&\quad -\left(\beta_n^3+{\rm i}\beta_n^2\gamma_n+\beta_n\gamma_n^2+{\rm i}\gamma_n^3\right)e^{2({\rm i}\beta_n+\gamma_n) \tilde{h}_2}\Big\}.
\end{align*}

\begin{lemma}\label{BoundPML}
Assuming that the thickness of the PML layers $\delta_k$ for $k=1, 2$ are sufficiently large, the DtN operators $\hat{T}^{(k)}_{11}: H^{3/2}(\Gamma_k)\to H^{-3/2}(\Gamma_k)$, $\hat{T}^{(k)}_{12}: H^{1/2}(\Gamma_k)\to H^{-3/2}(\Gamma_k)$,
$\hat{T}^{(k)}_{21}: H^{3/2}(\Gamma_k)\to H^{-1/2}(\Gamma_k)$, and $\hat{T}^{(k)}_{22}: H^{1/2}(\Gamma_k)\to H^{-1/2}(\Gamma_k)$ are bounded.
\end{lemma}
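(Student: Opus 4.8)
The plan is to proceed exactly as in the proof of Lemma~\ref{BoundTBC}. Each operator $\hat{T}^{(k)}_{ij}$ is a Fourier multiplier on $\Gamma_k$ whose symbol, read off from the formulas preceding the lemma, has the form $-N^{(n)}_{ij}/D^{(n)}_k$, with $D^{(n)}_k$ as in \eqref{Denomiantor1}--\eqref{Denomiantor2} and $N^{(n)}_{ij}$ a sum of at most five terms, each a polynomial in $\beta_n$ and $\gamma_n$ multiplied by one of the exponentials $1$, $e^{2{\rm i}\beta_n\tilde h_k}$, $e^{2\gamma_n\tilde h_k}$, $e^{({\rm i}\beta_n+\gamma_n)\tilde h_k}$, $e^{2({\rm i}\beta_n+\gamma_n)\tilde h_k}$. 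By the definition of the $H^s(\Gamma_k)$-norm, it then suffices to show that $|N^{(n)}_{ij}/D^{(n)}_k|$ is $O((1+\alpha_n^2)^{3/2})$ for $(i,j)=(1,1)$, $O(1+\alpha_n^2)$ for $(i,j)=(1,2)$ and $(2,1)$, and $O((1+\alpha_n^2)^{1/2})$ for $(i,j)=(2,2)$; the four claimed mapping properties then follow from the same termwise estimate of the Fourier sums carried out in Lemma~\ref{BoundTBC}.

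The key step is a lower bound for $|D^{(n)}_k|$. Take $k=1$, so $\Re\tilde h_1=\delta_1>0$. For $|n|$ large one has $\beta_n={\rm i}(\alpha_n^2-\kappa^2)^{1/2}$ and $\gamma_n=(\alpha_n^2+\kappa^2)^{1/2}$, so that $|e^{2\gamma_n\tilde h_1}|=e^{2\gamma_n\delta_1}$, while $|e^{2{\rm i}\beta_n\tilde h_1}|=e^{-2(\alpha_n^2-\kappa^2)^{1/2}\delta_1}$ is exponentially small and $|e^{({\rm i}\beta_n+\gamma_n)\tilde h_1}|$, $|e^{2({\rm i}\beta_n+\gamma_n)\tilde h_1}|$ stay bounded because $\gamma_n-(\alpha_n^2-\kappa^2)^{1/2}\to 0$. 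Hence the term $({\rm i}\beta_n^2-{\rm i}\gamma_n^2+2\beta_n\gamma_n)e^{2\gamma_n\tilde h_1}$ dominates \eqref{Denomiantor1}, and its coefficient satisfies $|{\rm i}\beta_n^2-{\rm i}\gamma_n^2+2\beta_n\gamma_n|=|\gamma_n+{\rm i}\beta_n|^2$; since $\gamma_n+{\rm i}\beta_n=2\kappa^2\,[(\alpha_n^2+\kappa^2)^{1/2}+(\alpha_n^2-\kappa^2)^{1/2}]^{-1}$ for $|n|$ large, this coefficient is $\asymp(1+\alpha_n^2)^{-1}$. All remaining terms in \eqref{Denomiantor1} are bounded by a fixed polynomial in $|\alpha_n|$ times a bounded exponential, hence are negligible once $|n|$ is large because $e^{2\gamma_n\delta_1}$ outgrows any polynomial, and one concludes $|D^{(n)}_1|\gtrsim(1+\alpha_n^2)^{-1}e^{2\gamma_n\delta_1}$ for $|n|$ large. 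The case $k=2$ is the mirror image: since $\Re\tilde h_2=-\delta_2<0$, the roles of $e^{2{\rm i}\beta_n\tilde h_2}$ and $e^{2\gamma_n\tilde h_2}$ are exchanged, the leading term of \eqref{Denomiantor2} is $({\rm i}\beta_n^2-{\rm i}\gamma_n^2+2\beta_n\gamma_n)e^{2{\rm i}\beta_n\tilde h_2}$, and $|D^{(n)}_2|\gtrsim(1+\alpha_n^2)^{-1}e^{2(\alpha_n^2-\kappa^2)^{1/2}\delta_2}$ for $|n|$ large. For the finitely many remaining modes, $D^{(n)}_k\neq 0$ when $\delta_k$ is large, as observed just after \eqref{Denomiantor1}--\eqref{Denomiantor2}, so each such symbol is a fixed finite number.

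Bounding the numerators is then routine. Each of the terms composing $N^{(n)}_{ij}$ has modulus at most a power of $|\alpha_n|$ times the modulus of its accompanying exponential; dividing by the lower bound just obtained, every term whose exponential is not the dominant one ($e^{2\gamma_n\tilde h_1}$ for $k=1$, $e^{2{\rm i}\beta_n\tilde h_2}$ for $k=2$) becomes a subdominant exponential over the dominant one and decays exponentially in $|n|$, while the term carrying the dominant exponential, once that exponential is cancelled against the leading part of $D^{(n)}_k$, reduces to a quotient of polynomials in $\beta_n,\gamma_n$ whose order is read off using the exact identities $\beta_n^2+\gamma_n^2=2\kappa^2$ and $\gamma_n^2-\beta_n^2=2\alpha_n^2$, valid for $|\alpha_n|>\kappa$: it is $O((1+\alpha_n^2)^{3/2})$ for $\hat{T}^{(k)}_{11}$, $O(1+\alpha_n^2)$ for $\hat{T}^{(k)}_{12}$ and $\hat{T}^{(k)}_{21}$, and $O((1+\alpha_n^2)^{1/2})$ for $\hat{T}^{(k)}_{22}$. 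Substituting these symbol bounds into the $H^s(\Gamma_k)$-norms exactly as in Lemma~\ref{BoundTBC} gives the asserted continuity.

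The main obstacle will be the bookkeeping in the two preceding steps. Because the leading coefficient of $D^{(n)}_k$ is the \emph{small} quantity $|\gamma_n+{\rm i}\beta_n|^2\asymp(1+\alpha_n^2)^{-1}$, rather than something of order $\alpha_n^2$, one must check carefully that it is genuinely dominant --- it is, owing to the factor $e^{2\gamma_n\delta_k}$, but only for $|n|$ beyond a threshold that grows with $\delta_k$, which is exactly why $\delta_k$ has to be taken large enough to also secure $D^{(n)}_k\neq 0$ for the low modes --- and one must use the algebraic cancellations among the monomials in $\beta_n$ and $\gamma_n$ making up the numerators (which collapse combinations such as $\beta_n^4+\gamma_n^4$ or $\beta_n^2+\gamma_n^2$ to a much lower order in $|\alpha_n|$) to see that the surviving quotients grow no faster than the stated powers of $(1+\alpha_n^2)$. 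Apart from this, the proof merely repeats the computations of Lemma~\ref{BoundTBC}.
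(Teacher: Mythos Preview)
Your approach is essentially the paper's: isolate the dominant exponential ($e^{2\gamma_n\tilde h_1}$ for $k=1$, $e^{2{\rm i}\beta_n\tilde h_2}$ for $k=2$) in both numerator and denominator and bound the resulting quotient of polynomials in $\beta_n,\gamma_n$. The paper simply writes that quotient and asserts the final bound $\lesssim(1+\alpha_n^2)^{3/2}$ (resp.\ $(1+\alpha_n^2)$, $(1+\alpha_n^2)^{1/2}$) using $|\beta_n|\sim|\gamma_n|\sim|\alpha_n|$, whereas you rightly flag that the leading coefficient of $D_k^{(n)}$ is $|\gamma_n+{\rm i}\beta_n|^2\asymp(1+\alpha_n^2)^{-1}$ rather than of order $\alpha_n^2$, so matching cancellations in the numerators are needed. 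They are there: each leading numerator factors with a $(\gamma_n+{\rm i}\beta_n)$ that cancels one copy in $D_k^{(n)}=-{\rm i}(\gamma_n+{\rm i}\beta_n)^2 e^{2\gamma_n\tilde h_1}+\cdots$, leaving precisely the symbols of $T_{ij}^{(k)}$. So the two proofs coincide; yours is just more explicit about why the naive degree count gives the right answer.

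Two small inaccuracies in your side remarks. First, the threshold in $|n|$ beyond which the $e^{2\gamma_n\tilde h_1}$ term dominates does not grow with $\delta_k$; larger $\delta_k$ makes the exponential win \emph{sooner}. What genuinely requires $\delta_k$ large is the separate issue $D_k^{(n)}\neq 0$ for the finitely many low modes. Second, $\beta_n^4+\gamma_n^4=2\kappa^4+2\alpha_n^4$ does \emph{not} collapse to lower order; the identity that does the work is $\beta_n^2+\gamma_n^2=2\kappa^2$ together with the common factor $\gamma_n+{\rm i}\beta_n$. Neither point affects the validity of your argument.
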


\begin{proof}
It is sufficient to present the results for the DtN operators on $\Gamma_1$, as the corresponding outcomes can be similarly established for the DtN operators on $\Gamma_2$.

As demonstrated in the derivation of the TBC on $\Gamma_1$, when the thickness of the PML region is sufficiently large, the term containing $e^{2\gamma_n\tilde h_1}$ is dominant in both the denominator and the numerator. Noting \eqref{abg}, i.e., $\beta_n \sim\alpha_n$ and $\gamma_n\sim\alpha_n$ as $n\to\infty$, we assert that for any function $f\in H^{3/2}(\Gamma_1)$
\begin{align*}
	\|\hat{T}_{11}^{(1)} f\|^2_{H^{-3/2}(\Gamma_1)}&\lesssim \sum\limits_{n\in\mathbb{Z}}
		(1+\alpha_n^2)^{-3/2}\left|\frac{f^{(n)}}{D_1^{(n)}}
		\left({\rm i}\beta_n^4\gamma_n+\beta_n^3\gamma_n^2+{\rm i}\beta_n^2\gamma_n^3
		+\beta_n\gamma_n^4\right)e^{2\gamma_n\tilde{h}_1}\right| ^2\\
	&\lesssim \sum\limits_{n\in\mathbb{Z}}
		(1+\alpha_n^2)^{-3/2}\left|\frac{\left({\rm i}\beta_n^4\gamma_n
		+\beta_n^3\gamma_n^2+{\rm i}\beta_n^2\gamma_n^3
		+\beta_n\gamma_n^4\right) e^{2\gamma_n\tilde{h}_1}}{\left({\rm i}\beta_n^2
		-{\rm i}\gamma_n^2+2\beta_n\gamma_n\right)e^{2\gamma_n\tilde{h}_1}}
		\right| ^2 |f^{(n)}|^2\\
	&\lesssim \sum\limits_{n\in\mathbb{Z}}
		(1+\alpha_n^2)^{3/2}|f^{(n)}|^2=\|f\|^2_{H^{3/2}(\Gamma_1)}, 
\end{align*}
and 
\begin{align*}
\|\hat{T}_{21}^{(1)} f\|^2_{H^{-1/2}(\Gamma_1)} &\lesssim\sum\limits_{n\in\mathbb{Z}}(1+\alpha_n^2)^{-1/2}\\
&\hspace{-2cm}\times\left|\frac{f^{(n)}}{D_1^{(n)}}\Big(-\frac{\rm i}{2}\mu\beta_n^4+(1-\mu)\beta_n^3\gamma_n-(2-\mu){\rm i}\beta_n^2\gamma_n^2-(1-\mu)\beta_n\gamma_n^3-\frac{\rm i}{2}\mu\gamma_n^4\Big)e^{2\gamma_n\tilde{h}_1}\right|^2\\
&\lesssim\sum\limits_{n\in\mathbb{Z}}(1+\alpha_n^2)^{-1/2}\\
&\hspace{-2cm}\times\left|
	\frac{-\frac{\rm i}{2}\mu\beta_n^4+(1-\mu)\beta_n^3\gamma_n-(2-\mu){\rm i}\beta_n^2\gamma_n^2
	-(1-\mu)\beta_n\gamma_n^3-\frac{\rm i}{2}\mu\gamma_n^4}{{\rm i}\beta_n^2
	-{\rm i}\gamma_n^2+2\beta_n\gamma_n}\right|^2 |f^{(n)}|^2\\
&\lesssim\sum\limits_{n\in\mathbb{Z}}
		(1+\alpha_n^2)^{3/2} |f^{(n)}|^2=\|f\|^2_{H^{3/2}(\Gamma_1)}.
\end{align*}
Similarly, we have that for any function $f\in H^{1/2}(\Gamma_1)$
\begin{align*}
\|\hat{T}_{12}^{(1)} f\|^2_{H^{-3/2}(\Gamma_1)}&\lesssim\sum\limits_{n\in\mathbb{Z}}(1+\alpha_n^2)^{-3/2}\\
&\hspace{-2cm}\times\left|\frac{f^{(n)}}{D_1^{(n)}}\Big(-\frac{\rm i}{2}\mu\beta_n^4+(1-\mu)\beta_n^3\gamma_n-(2-\mu){\rm i}\beta_n^2\gamma_n^2-(1-\mu)\beta_n\gamma_n^3-\frac{\rm i}{2}\mu\gamma_n^4\Big)e^{2\gamma_n\tilde{h}_1}\right|^2\\
&\lesssim\sum\limits_{n\in\mathbb{Z}}(1+\alpha_n^2)^{-3/2}\\
&\hspace{-2cm}\times\left|\frac{-\frac{\rm i}{2}\mu\beta_n^4+(1-\mu)\beta_n^3\gamma_n-(2-\mu){\rm i}\beta_n^2\gamma_n^2
	-(1-\mu)\beta_n\gamma_n^3-\frac{\rm i}{2}\mu\gamma_n^4}{{\rm i}\beta_n^2
	-{\rm i}\gamma_n^2+2\beta_n\gamma_n}\right|^2 |f^{(n)}|^2\\
&\lesssim	\sum\limits_{n\in\mathbb{Z}}
		(1+\alpha_n^2)^{1/2} |f^{(n)}|^2=\|f\|^2_{H^{1/2}(\Gamma_1)},
\end{align*}
and
\begin{align*}
	\|\hat{T}_{22}^{(1)} f\|^2_{H^{-1/2}(\Gamma_1)}&\lesssim\sum\limits_{n\in\mathbb{Z}}
		(1+\alpha_n^2)^{-1/2}\left|\frac{f^{(n)}}{D_1^{(n)}}
		\left(\beta_n^3-{\rm i}\beta_n^2\gamma_n+\beta_n\gamma_n^2
		-{\rm i}\gamma_n^3\right)e^{2\gamma_n\tilde{h}_1}\right| ^2\\
	&\lesssim 	\sum\limits_{n\in\mathbb{Z}}
		(1+\alpha_n^2)^{-1/2}\left|\frac{\left(\beta_n^3-{\rm i}\beta_n^2\gamma_n
		+\beta_n\gamma_n^2-{\rm i}\gamma_n^3\right)e^{2\gamma_n\tilde{h}_1}}{\left({\rm i}\beta_n^2
		-{\rm i}\gamma_n^2+2\beta_n\gamma_n\right)e^{2\gamma_n\tilde{h}_1}}
		\right|^2 |f^{(n)}|^2\\
	&\lesssim	\sum\limits_{n\in\mathbb{Z}}
		(1+\alpha_n^2)^{1/2} |f^{(n)}|^2=\|f\|^2_{H^{1/2}(\Gamma_1)}, 
\end{align*}
which complete the proof.
\end{proof}

The following lemma addresses the error estimates of the DtN operators between the PML problem and the original scattering problem. 

\begin{lemma}\label{Error}
Let
\[
	\Delta^{-}=\min_{n\in\mathbb Z}\left\{\Re(\beta_n)>0\right\},\quad
	\Delta^{+}=\min_{n\in\mathbb Z}\left\{\Im(\beta_n)>0\right\}.
\]
For $k=1, 2$, if the thickness of the PML region $\delta_k$ is sufficiently large, then the following estimates hold for any $f\in H^{3/2}(\Gamma_k)$:
\begin{align*}
\|(\hat{T}_{11}^{(k)}-T_{11}^{(k)})f\|_{H^{-3/2}(\Gamma_k)} &\lesssim  \Theta\|f\|_{H^{3/2}(\Gamma_k)},\\
\|(\hat{T}_{21}^{(k)}-T_{21}^{(k)})f\|_{H^{-1/2}(\Gamma_k)} &\lesssim   \Theta\|f\|_{H^{3/2}(\Gamma_k)},
\end{align*}
and the following estimates hold for any $f\in H^{1/2}(\Gamma_k)$:
\begin{align*}
\|(\hat{T}_{12}^{(k)}-T_{12}^{(k)})f\|_{H^{-3/2}(\Gamma_k)} & \lesssim  \Theta\|f\|_{H^{1/2}(\Gamma_k)},\\
\|(\hat{T}_{22}^{(k)}-T_{22}^{(k)})f\|_{H^{-1/2}(\Gamma_k)} & \lesssim  \Theta\|f\|_{H^{1/2}(\Gamma_k)},
\end{align*}
where 
\begin{equation}\label{Theta}
 \Theta=\max\Big\{e^{- \frac{2\delta_k\sigma_0}{m+1}\Delta^-}, e^{-2\delta_k(\kappa^2+\alpha^2)^{1/2}}, e^{-2\delta_k\Delta^+} \Big\}. 
\end{equation}
\end{lemma}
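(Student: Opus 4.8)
The plan is to reduce the four operator-norm bounds to uniform pointwise estimates on the Fourier symbols and then to extract the required smallness from the explicit expressions for $\hat T^{(k)}_{ij}$ and $T^{(k)}_{ij}$. Denote by $t^{(k,n)}_{ij}$ and $\hat t^{(k,n)}_{ij}$ the $n$-th Fourier multipliers of $T^{(k)}_{ij}$ and $\hat T^{(k)}_{ij}$. By the definition of the norm on $H^s(\Gamma_k)$, it suffices to prove, uniformly in $n\in\mathbb Z$, that $|\hat t^{(k,n)}_{11}-t^{(k,n)}_{11}|\lesssim\Theta(1+\alpha_n^2)^{3/2}$ and $|\hat t^{(k,n)}_{21}-t^{(k,n)}_{21}|\lesssim\Theta(1+\alpha_n^2)^{3/2}$, as well as $|\hat t^{(k,n)}_{12}-t^{(k,n)}_{12}|\lesssim\Theta(1+\alpha_n^2)^{1/2}$ and $|\hat t^{(k,n)}_{22}-t^{(k,n)}_{22}|\lesssim\Theta(1+\alpha_n^2)^{1/2}$; summing against the weights $(1+\alpha_n^2)^{\mp3/2}$ and $(1+\alpha_n^2)^{\mp1/2}$ then gives the lemma. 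I will carry out the details for $k=1$; the case $k=2$ is identical after replacing $\tilde h_1$ by $\tilde h_2$ and the dominant exponential $e^{2\gamma_n\tilde h_1}$ by $e^{2{\rm i}\beta_n\tilde h_2}$, in accordance with the discussion following \eqref{Denomiantor2}.

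First I would record the two exponential bounds that produce $\Theta$. Splitting $\mathbb Z$ into propagating modes ($\kappa>|\alpha_n|$, so $\beta_n>0$) and evanescent modes ($\kappa<|\alpha_n|$, so $\beta_n={\rm i}\,\Im(\beta_n)$ with $\Im(\beta_n)>0$) and using $\Re\tilde h_1=\delta_1$, $\Im\tilde h_1=\delta_1\sigma_0/(m+1)$, one gets $|e^{2{\rm i}\beta_n\tilde h_1}|=e^{-2\beta_n\delta_1\sigma_0/(m+1)}$ in the first case and $|e^{2{\rm i}\beta_n\tilde h_1}|=e^{-2\delta_1\Im(\beta_n)}$ in the second, whence $|e^{2{\rm i}\beta_n\tilde h_1}|\le\max\{e^{-2\delta_1\sigma_0\Delta^-/(m+1)},\,e^{-2\delta_1\Delta^+}\}\le\Theta$; moreover $|e^{-2\gamma_n\tilde h_1}|=e^{-2\gamma_n\delta_1}\le e^{-2\delta_1(\kappa^2+\alpha^2)^{1/2}}\le\Theta$ since $\gamma_n\ge(\kappa^2+\alpha^2)^{1/2}$, and mixed factors such as $|e^{({\rm i}\beta_n-\gamma_n)\tilde h_1}|$ are dominated by the geometric mean of the preceding two.

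The algebraic core is to expose the cancellation of the leading exponential. Dividing numerator and denominator of each $\hat t^{(1,n)}_{ij}$ by $e^{2\gamma_n\tilde h_1}$ — the dominant exponential in $D^{(n)}_1$ and in the numerators, as already used in the proof of Lemma \ref{BoundPML} — and invoking the identities $\beta_n^2+\gamma_n^2=2\kappa^2$, $(\gamma_n+{\rm i}\beta_n)(\gamma_n-{\rm i}\beta_n)=2\kappa^2$ together with $|{\rm i}\beta_n^2-{\rm i}\gamma_n^2+2\beta_n\gamma_n|=|\gamma_n+{\rm i}\beta_n|^2$ noted after \eqref{Denomiantor1}, one checks that the ratio of the leading coefficients equals exactly the symbol $t^{(1,n)}_{ij}$ of \eqref{T1}. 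Consequently $\hat t^{(1,n)}_{ij}-t^{(1,n)}_{ij}$ is a single fraction whose numerator, after cancellation of all $e^{2\gamma_n\tilde h_1}$ contributions, is a finite sum of terms $p_{ij,l}(\beta_n,\gamma_n)\,e_l$ with $p_{ij,l}$ polynomial and $e_l\in\{e^{2{\rm i}\beta_n\tilde h_1},\,e^{-2\gamma_n\tilde h_1},\,e^{({\rm i}\beta_n-\gamma_n)\tilde h_1}\}$, and whose denominator satisfies $|D^{(n)}_1|\gtrsim|\gamma_n+{\rm i}\beta_n|^2 e^{2\gamma_n\delta_1}$ uniformly in $n$ once $\delta_1$ is large; this last bound is proved exactly as the analogous statement used implicitly in Lemma \ref{BoundPML}, by showing that the remaining terms of $D^{(n)}_1$ are uniformly exponentially smaller than $({\rm i}\beta_n^2-{\rm i}\gamma_n^2+2\beta_n\gamma_n)e^{2\gamma_n\tilde h_1}$.

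Finally I would combine these ingredients and absorb the polynomial loss. The previous step yields bounds of the form $|\hat t^{(1,n)}_{ij}-t^{(1,n)}_{ij}|\lesssim(1+\alpha_n^2)^{b_{ij}}\bigl(|e^{2{\rm i}\beta_n\tilde h_1}|+|e^{-2\gamma_n\tilde h_1}|\bigr)$ with an exponent $b_{ij}$ larger than the target order $a_{11}=a_{21}=3/2$, $a_{12}=a_{22}=1/2$, the excess stemming from the degeneracy $|\gamma_n+{\rm i}\beta_n|\to0$ on evanescent modes (where, conversely, $|\gamma_n-{\rm i}\beta_n|$ grows). The decisive point is that, once $\delta_1$ exceeds a threshold depending only on $\kappa,\alpha,\Lambda,\sigma_0,m$, the function $t\mapsto(1+\kappa^2+t^2)^{c}e^{-2t\delta_1}$ is nonincreasing on $[\Delta^+,\infty)$ for every fixed $c\ge0$, so that $(1+\alpha_n^2)^{c}|e^{2{\rm i}\beta_n\tilde h_1}|\le(1+\kappa^2+(\Delta^+)^2)^{c}e^{-2\Delta^+\delta_1}\lesssim\Theta$ for all evanescent $n$; likewise $(1+\alpha_n^2)^{c}|e^{-2\gamma_n\tilde h_1}|\lesssim e^{-2\delta_1(\kappa^2+\alpha^2)^{1/2}}\lesssim\Theta$; and the finitely many propagating modes are estimated directly with a $\delta_1$-independent constant, since for them $|{\rm i}\beta_n^2-{\rm i}\gamma_n^2+2\beta_n\gamma_n|=\beta_n^2+\gamma_n^2=2\kappa^2$ furnishes a uniform lower bound on the denominator. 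Thus the surplus powers $(1+\alpha_n^2)^{b_{ij}-a_{ij}}$ are swallowed by a fraction of the exponential smallness, leaving $|\hat t^{(1,n)}_{ij}-t^{(1,n)}_{ij}|\lesssim\Theta(1+\alpha_n^2)^{a_{ij}}$ with the correct exponent; summing over $n$ and using the definitions of the trace norms finishes the proof. The main obstacle is precisely this last step: tracking the polynomial degrees generated by the degeneracy of $\gamma_n+{\rm i}\beta_n$ on evanescent modes and verifying that, for $\delta_k$ large, they are absorbed by the corresponding portion of $\Theta$ — the explicit cancellation of the $e^{2\gamma_n\tilde h_1}$ terms in the numerator of the difference being the bookkeeping device that makes this possible.
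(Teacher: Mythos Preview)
Your proposal is correct and follows essentially the same route as the paper: compute the Fourier symbol of the difference, observe that the dominant $e^{2\gamma_n\tilde h_1}$ contribution cancels in the numerator while it survives in $D^{(n)}_1$, and then bound the remaining exponentially small quotients uniformly in $n$ to extract $\Theta$. You are in fact more careful than the paper on one point --- the paper's intermediate display $\lesssim\sum_{n}(1+\alpha_n^2)^{3/2}(\cdots)|f^{(n)}|^2$ tacitly ignores the degeneracy $|\gamma_n+{\rm i}\beta_n|\sim\kappa^2/|\alpha_n|$ on evanescent modes, and your final monotonicity argument for $(1+\kappa^2+t^2)^c e^{-2t\delta_1}$ is precisely what is needed to make that step rigorous.
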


\begin{proof}
Given the similarity in proof, we only present the details of the error estimate between $\hat{T}_{11}^{(1)}$ and $T_{11}^{(1)}$, with the understanding that the results for the other operators can be obtained in the same manner.

It follows from \eqref{PMLGamma1us} and \eqref{T1} that
\begin{align*}
& (T_{11}^{(1)}-\hat{T}_{11}^{(1)}) f = \sum\limits_{n\in\mathbb{Z}}\frac{e^{{\rm i}\alpha_n x}}{D^{(n)}_1}f_1^{(n)}
\Big\{\left(-2{\rm i}\beta_n^4\gamma_n+4\beta_n^3\gamma_n^2+2{\rm i}\beta_n^2\gamma_n^3\right)\\
&\quad+\left(2{\rm i}\beta_n^4\gamma_n+2\beta_n^3\gamma_n^2+2{\rm i}\beta_n^2\gamma_n^3+2\beta_n\gamma_n^4\right)
e^{2{\rm i}\beta_n \tilde{h}_1} -\left(8{\rm i}\beta_n^2\gamma_n^3+\beta_n^3\gamma_n^2\right)e^{({\rm i}\beta_n+\gamma_n)\tilde{h}_1}\\
&\quad+\left(2\beta_n^3\gamma_n^2+4{\rm i}\beta_n^2\gamma_n^3-2\beta_n\gamma_n^4\right)e^{2({\rm i}\beta_n+\gamma_n) \tilde{h}_1}\Big\}.
\end{align*}
When $\delta_1$ is sufficiently large, the dominant term in the denominator is the one involving $e^{2 \gamma_n \tilde{h}_1}$, which has a coefficient of ${\rm i}\beta_n^2-{\rm i}\gamma_n^2+2\beta_n\gamma_n$. The dominant terms in the numerator are either the constant term or the one that contains $e^{2({\rm i}\beta_n+\gamma_n) \tilde{h}_1}$.
By the choice of PML parameters given in \eqref{h1}, a straightforward calculation yields 
\begin{align*}
2\gamma_h\tilde{h}_1=2\delta_1\left(1+\frac{{\rm i}\sigma_0}{m+1}\right)(\kappa^2+\alpha_n^2)^{1/2}
	=2\delta_1(\kappa^2+\alpha_n^2)^{1/2}+\frac{2{\rm i}\delta_1\sigma_0}{m+1}(\kappa^2+\alpha_n^2)^{1/2},
\end{align*}
and
\begin{align*}
2{\rm i}\beta_n\tilde{h}_1=\left\{
	\begin{aligned}
	&2{\rm i}\delta_1\left(1+\frac{{\rm i}\sigma_0}{m+1}\right)(\kappa^2-\alpha_n^2)^{1/2}\\
	&\hspace{1cm}=-\frac{2\delta_1\sigma_0}{m+1}(\kappa^2-\alpha_n^2)^{1/2}+2{\rm i}\delta_1(\kappa^2-\alpha_n^2)^{1/2}\quad\text{if} ~ |\alpha_n|<\kappa,\\
	&-2\delta_1\left(1+\frac{{\rm i}\sigma_0}{m+1}\right)(\alpha_n^2-\kappa^2)^{1/2}\\
	&\hspace{1cm}=-2\delta_1(\alpha_n^2-\kappa^2)^{1/2}-\frac{2{\rm i}\delta_1\sigma_0}{m+1}(\alpha_n^2-\kappa^2)^{1/2}
\quad\text{if} ~ |\alpha_n|>\kappa.\end{aligned}
	\right.
\end{align*}
Then we obtain 
\begin{align*}
&\|(T_{11}^{(1)}-\hat{T}_{11}^{(1)}) f\|^2_{H^{-3/2}(\Gamma_1)}\lesssim
\sum\limits_{n\in\mathbb{Z}}(1+\alpha_n^2)^{-3/2}\Bigg(\bigg|
	\frac{-2{\rm i}\beta_n^4\gamma_n+4\beta_n^3\gamma_n^2
	+2{\rm i}\beta_n^2\gamma_n^3}{\left({\rm i}\beta_n^2-{\rm i}\gamma_n^2+2\beta_n\gamma_n\right)
	e^{2\gamma_n\tilde{h}_1}}\bigg|^2\\
&\qquad+\bigg| \frac{\left(2\beta_n^3\gamma_n^2+4{\rm i}\beta_n^2\gamma_n^3-2\beta_n\gamma_n^4\right)
	e^{2({\rm i}\beta_n+\gamma_n) \tilde{h}_1}}
	{\left({\rm i}\beta_n^2-{\rm i}\gamma_n^2+2\beta_n\gamma_n\right)e^{2\gamma_n\tilde{h}_1}}\bigg|^2\Bigg)|f_1^{(n)}|^2\\
&\lesssim\sum\limits_{n\in\mathbb{Z}}(1+\alpha_n^2)^{3/2} 
	\left(e^{-4\delta_1(\kappa^2+\alpha_n^2)^{1/2}}+e^{-\frac{4\delta_1\sigma_0}{m+1}|\kappa^2-\alpha_n^2|^{1/2}}
	+e^{-4\delta_1|\alpha_n^2-\kappa^2|^{1/2}}\right)|f_1^{(n)}|^2\\
&\lesssim  \Theta\|f\|_{H^{3/2}(\Gamma_1)},
\end{align*}
which completes the proof. 
\end{proof}

It is evident from Lemma \ref{Error} that the DtN operators of the PML problem exhibit exponential convergence in the operator norm to the DtN operators of the original scattering problem. This convergence is a crucial factor contributing to the exponential convergence of the PML solution towards the solution of the original scattering problem.

\subsection{Convergence analysis}

By employing the TBCs for the PML problem as given in \eqref{DtNGamma1PML} and \eqref{PMLGamma2}, the
PML problem \eqref{MBHE} can be reformulated equivalently  in the domain $\Omega$:
\begin{equation}\label{TotalBiharmonicPML}
\left\{
\begin{aligned}
	&\Delta^2 u^{\rm PML}-\kappa^4 u^{\rm PML}=0 & &{\rm in} ~ \Omega,\\
	& u^{\rm PML}=0,\quad \partial_{\nu} u^{\rm PML}=0 &&{\rm on} ~ \Gamma_c,\\
	& N_1 u^{\rm PML}=\hat{T}_{11}^{(1)} f_1+\hat{T}_{12}^{(1)}g_1+\hat{p}_1 &&{\rm on} ~\Gamma_1,\\
	& M_1 u^{\rm PML}=\hat{T}_{21}^{(1)}f_1+\hat{T}_{22}^{(1)}g_1+\hat{p}_2 &&{\rm on} ~ \Gamma_1,\\
	& N_2 u^{\rm PML}=\hat{T}_{11}^{(2)} f_2+\hat{T}_{12}^{(2)}g_2 &&{\rm on} ~ \Gamma_2,\\
	& M_2 u^{\rm PML}=\hat{T}_{21}^{(2)}f_2+\hat{T}_{22}^{(2)}g_2 &&{\rm on}~ \Gamma_2,
\end{aligned}
\right.
\end{equation}
where $\hat{p}_1$ and $\hat{p}_2$ are given in \eqref{hp12}, $(f_k, g_k)$ represent the Dirichlet data of the total field $u^{\rm PML}$ on $\Gamma_k$ for $k=1, 2$. 

The variational problem of \eqref{TotalBiharmonicPML} is to find $u^{\rm PML}\in H_{{\rm qp}, \Gamma_c}^2(\Omega)$
such that 
\begin{equation}\label{variationalPML}
a^{\rm PML}(u^{\rm PML}, v)=\int_{\Gamma_1} \left(\hat{p}_1\bar{v} 
+ \hat{p}_2\partial_\nu\bar{v}\right){\rm d}s\quad\forall\, v\in H_{{\rm qp}, \Gamma_c}^2(\Omega),
\end{equation}
where the sesquilinear form 
$a^{\rm PML}(u, v):  H_{{\rm qp}, \Gamma_c}^2(\Omega)\times  H_{{\rm qp}, \Gamma_c}^2(\Omega)\rightarrow \mathbb{C}$ is defined as 
\begin{align}\label{auvPML}
a^{\rm PML}(u, v)=\int_{\Omega} \bigg[\mu\Delta u\Delta \bar{v}+(1-\mu)\sum\limits_{i,j=1}^2\frac{\partial^2 u}{\partial x_i\partial x_j}\frac{\partial^2 \bar{v}}{\partial x_i\partial x_j}-\kappa^4 u\bar{v}\bigg]{\rm d}x\notag\\
-\sum_{k=1}^2\int_{\Gamma_k} (\mathbb{\hat T}^{(k)}\boldsymbol u)\cdot\overline{\boldsymbol v}{\rm d}s. 
\end{align}
with $\boldsymbol{u}, \boldsymbol{v}$, and $\mathbb {\hat T}^{(k)}$ given by 
\[
 \boldsymbol{u}=\begin{bmatrix}
                 u\\
                 \partial_\nu u
                \end{bmatrix},\quad \boldsymbol{v}=\begin{bmatrix}
                 v\\
                 \partial_\nu v
                \end{bmatrix},\quad 
\mathbb{\hat T}^{(k)}=\begin{bmatrix}
                 {\hat T}_{11}^{(k)} & {\hat T}_{12}^{(k)}\\[5pt]
                 {\hat T}_{21}^{(k)} & {\hat T}_{22}^{(k)}
                \end{bmatrix}.
\]

\begin{theorem}\label{PositivePML}
Assuming that the thickness of the PML regions is sufficiently large, the variational problem \eqref{variationalPML} has a unique weak solution $u^{\rm PML}\in H^2_{{\rm qp}, \Gamma_c}(\Omega)$ except for a discrete set of wavenumbers $\kappa$. Moreover, the solution satisfies the error estimate
\begin{eqnarray}\label{estimate}
\|u-u^{\rm PML}\|_{H^2(\Omega)}\lesssim  \Theta\|u^i\|_{H^2(\Omega)},
\end{eqnarray}
where $u^i$ is the incident field and $u$ is the solution of the variational problem \eqref{variationalTBC}. 
\end{theorem}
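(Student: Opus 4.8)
The argument splits into two parts: unique solvability of the PML variational problem \eqref{variationalPML}, and the error bound \eqref{estimate}. The key is to view $a^{\rm PML}$ as a small perturbation of $a$. Writing $a^{\rm PML}(u,v)=a(u,v)+b(u,v)$ with
\[
b(u,v)=-\sum_{k=1}^{2}\int_{\Gamma_k}\big((\hat{\mathbb{T}}^{(k)}-\mathbb T^{(k)})\boldsymbol u\big)\cdot\overline{\boldsymbol v}\,{\rm d}s,
\]
Lemma \ref{Error} together with the trace theorem gives $|b(u,v)|\lesssim\Theta\|u\|_{H^2(\Omega)}\|v\|_{H^2(\Omega)}$, since each entry of $\hat{\mathbb{T}}^{(k)}-\mathbb T^{(k)}$ has operator norm of order $\Theta$ while the traces $u|_{\Gamma_k}$ and $\partial_\nu u|_{\Gamma_k}$ are controlled in $H^{3/2}(\Gamma_k)$ and $H^{1/2}(\Gamma_k)$ by $\|u\|_{H^2(\Omega)}$. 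Since $a$ satisfies the G\r{a}rding inequality established in the proof of Theorem \ref{MainResult1}, choosing $\delta_k$ large enough that $\Theta$ is small makes $a^{\rm PML}$ satisfy a G\r{a}rding inequality of the same type, so the Fredholm alternative yields a unique weak solution $u^{\rm PML}$ except for a discrete set of $\kappa$, exactly as in Theorem \ref{MainResult1}. Moreover, for $\kappa$ outside the exceptional set of the original problem \eqref{variationalTBC} the associated solution operator is bounded; since $b$ perturbs its operator realization by a term of norm of order $\Theta$, a Neumann series argument shows that for $\delta_k$ large enough the solution operator of \eqref{variationalPML} is bounded by a constant independent of $\delta_k$.

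For the error estimate, set $e=u-u^{\rm PML}$. Using $a(u,v)=a^{\rm PML}(u,v)-b(u,v)$ and subtracting \eqref{variationalPML} from \eqref{variationalTBC}, one finds that $e$ satisfies
\[
a^{\rm PML}(e,v)=\int_{\Gamma_1}\big((p_1-\hat p_1)\bar v+(p_2-\hat p_2)\partial_\nu\bar v\big){\rm d}s+b(u,v)\quad\forall\,v\in H^2_{{\rm qp},\Gamma_c}(\Omega).
\]
From the derivation of \eqref{DtNGamma1} one has $p_1=N_1u^i-T_{11}^{(1)}(u^i|_{\Gamma_1})-T_{12}^{(1)}(\partial_\nu u^i|_{\Gamma_1})$ and $p_2=M_1u^i-T_{21}^{(1)}(u^i|_{\Gamma_1})-T_{22}^{(1)}(\partial_\nu u^i|_{\Gamma_1})$, which combined with \eqref{hp12} gives
\[
p_1-\hat p_1=(\hat{T}_{11}^{(1)}-T_{11}^{(1)})(u^i|_{\Gamma_1})+(\hat{T}_{12}^{(1)}-T_{12}^{(1)})(\partial_\nu u^i|_{\Gamma_1}),
\]
and analogously for $p_2-\hat p_2$. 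Hence Lemma \ref{Error} and the trace theorem give $\|p_1-\hat p_1\|_{H^{-3/2}(\Gamma_1)}+\|p_2-\hat p_2\|_{H^{-1/2}(\Gamma_1)}\lesssim\Theta\|u^i\|_{H^2(\Omega)}$. For the term $b(u,v)$, I would first use the stability of \eqref{variationalTBC} — whose data $p_1,p_2$ scale linearly with the incident amplitude — to bound $\|u\|_{H^2(\Omega)}\lesssim\|u^i\|_{H^2(\Omega)}$, and then apply the estimate on $b$ above. Combining these with duality on $\Gamma_1$ gives $|a^{\rm PML}(e,v)|\lesssim\Theta\|u^i\|_{H^2(\Omega)}\|v\|_{H^2(\Omega)}$ for all $v$, and the uniform boundedness of the solution operator of \eqref{variationalPML} established above then yields $\|e\|_{H^2(\Omega)}\lesssim\Theta\|u^i\|_{H^2(\Omega)}$.

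The main obstacle is the last ingredient of the first paragraph: the stability constant of the PML problem, used in the final step, must not deteriorate as $\delta_k\to\infty$. This rules out a direct coercivity argument on $a^{\rm PML}$ (whose coefficients oscillate and grow in the complex-stretched region) and is precisely why the perturbative splitting $a^{\rm PML}=a+b$ with $\|b\|$ of order $\Theta\to0$ is essential. A secondary point is the bookkeeping of exceptional wavenumbers: one must check that, for $\kappa$ outside the (discrete) exceptional set of the original problem, taking $\delta_k$ large enough automatically keeps $\kappa$ outside the exceptional set of the PML problem, so that both $u$ and $u^{\rm PML}$ exist and the estimate closes. The remaining steps — the identities for $p_k-\hat p_k$ and the duality and trace estimates — are routine given Lemmas \ref{BoundTBC}, \ref{BoundPML}, \ref{Error} and the trace theorem.
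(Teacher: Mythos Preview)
Your proposal is correct and follows essentially the same strategy as the paper: write $a^{\rm PML}=a+b$ with $|b(u,v)|\lesssim\Theta\|u\|_{H^2}\|v\|_{H^2}$ via Lemma~\ref{Error}, transfer the G\r{a}rding inequality from $a$ to $a^{\rm PML}$, and derive an error equation whose right-hand side is controlled by $\Theta\|u^i\|_{H^2}$. The only cosmetic differences are that the paper writes the error equation with $\boldsymbol u^{\rm PML}$ rather than $\boldsymbol u$ in the boundary perturbation term (and correspondingly invokes the PML stability $\|u^{\rm PML}\|_{H^2}\lesssim\|u^i\|_{H^2}$ instead of the original one), and that you are more explicit than the paper about why the inf--sup constant of $a^{\rm PML}$ does not degenerate as $\delta_k\to\infty$; your Neumann-series remark fills a point the paper leaves implicit.
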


\begin{proof}
First, we demonstrate  that the sesquilinear form \eqref{auvPML} satisfies the G\r{a}rding inequality. It follows from
Lemmas \ref{PositiveTBC}, \ref{traceTheorem}, \ref{BoundPML}--\ref{Error} and the trace theorem that 
\begin{align*}
	\Re a^{\rm PML}(u, u) &= \Re a(u, u)+\Re\left\{\sum_{k=1}^2\int_{\Gamma_k} \big( ({\mathbb T}^{(k)}-\hat{\mathbb T}^{(k)})\boldsymbol u\big)\cdot \overline{\boldsymbol u}{\rm d}s\right\}\\
 	&\geq c_1 \|u\|^2_{H^2(\Omega)}-c_2\|u\|^2_{H^1(\Omega)}
 	- \Theta\left(c_3 \|u\|^2_{H^{3/2}(\Gamma_1\cup\Gamma_2)}+c_4 \|u\|^2_{H^{1/2}(\Gamma_1\cup\Gamma_2)}\right)\\
 	&\geq c_1 \|u\|^2_{H^2(\Omega)}-c_2\|u\|^2_{H^1(\Omega)}
 	- c_5\Theta  \|u\|^2_{H^{2}(\Omega)}. 
\end{align*}
Given the exponential decay of $\Theta$ concerning $\delta_k$, we can choose $\delta_k$ to be sufficiently large to ensure $c_1-c_5\Theta>0$. For all but a possibly discrete set of wavenumbers $\kappa$, it follows from the Fredholm alternative theorem that the variational problem is well-posed. Consequently, there is a positive constant $\gamma$ for which the subsequent inf-sup condition is satisfied:
\[
\sup\limits_{0\neq v\in H^2_{{\rm qp}, \Gamma_c}(\Omega)}
\frac{|a^{\rm PML}(u, v)|}{\|v\|_{H^2(\Omega)}}\geq \gamma \|u\|_{H^2(\Omega)}\quad\forall\, u\in H_{{\rm qp}, \Gamma_c}^2(\Omega). 
\]
Moreover, the PML solution $u^{\rm PML}$ satisfies the stability estimate
\begin{equation}\label{stability}
	\|u^{\rm PML}\|_{H^2(\Omega)}\lesssim \|u^{\rm inc}\|_{H^2(\Omega)}.
\end{equation}

It remains to prove the error estimate \eqref{estimate}. Denote by $e=u^{\rm PML}-u$ the error between the PML solution and the solution to the original scattering problem. Upon a straightforward calculation, we obtain
\begin{align*}
&\int_{\Gamma_1} \left(\left( \hat{p}_1-p_1\right) \bar{v}
	+\left(\hat{p}_2-p_2\right)\partial_\nu\bar{v} \right) {\rm d}s\\
&=a^{\rm PML}(u^{\rm PML}, v)-a(u, v)\\
&=a(e, v)+\sum_{k=1}^2\int_{\Gamma_k} \big(  ({\mathbb T}^{(k)}-\hat{\mathbb T}^{(k)}){\boldsymbol u}^{\rm PML}
	\big)\cdot\overline{\boldsymbol v}{\rm d}s,
\end{align*}
where ${\boldsymbol u}^{\rm PML}=[u^{\rm PML}, \partial_\nu u^{\rm PML}]^\top$. Hence we have 
\begin{align*}
a^{\rm PML}(e, v)&=-\sum_{k=1}^2\int_{\Gamma_k} \big(({\mathbb T}^{(k)}-\hat{\mathbb T}^{(k)}){\boldsymbol u}^{\rm PML}
	\big)\cdot\overline{\boldsymbol v}{\rm d}s-\int_{\Gamma_1} \big(  ({\mathbb T}^{(1)}-\hat{\mathbb T}^{(1)}){\boldsymbol u}^i\big)\cdot\overline{\boldsymbol v}{\rm d}s,
\end{align*}
where ${\boldsymbol u}^i=[u^i, \partial_\nu u^i]^\top$. By utilizing the continuity of the sesquilinear form \eqref{tbcauv}, the stability estimate \eqref{stability},  and Lemma \ref{Error}, we obtain 
\begin{eqnarray*}
\|e\|_{H^2(\Omega)}\lesssim 
\sup\limits_{0\neq v\in H^2_{{\rm qp}, \Gamma_c}(\Omega)}
	\frac{|a^{\rm PML}(e, v)|}{\|v\|_{H^2(\Omega)}}\lesssim \Theta
	 \left(\|u^{\rm PML}\|_{H^2(\Omega)}+\|u^i\|_{H^2(\Omega)}\right)
	 \lesssim \Theta\|u^i\|_{H^2(\Omega)},
\end{eqnarray*}
which completes the proof.
\end{proof}

It is apparent from Theorem \ref{PositivePML} and the definition of $\Theta$ in \eqref{Theta} that as the thickness of the PML regions increases, the PML solution $u^{\rm PML}$ exhibits exponential convergence towards the solution of the original scattering problem $u$.

\section{Conclusion}\label{Section6}

In this paper, we have investigated the scattering of flexural waves by a one-dimensional periodic array of cavities embedded in an infinite elastic thin plate. The problem is formulated using the biharmonic wave equation in an unbounded domain. Initially, TBCs are introduced to reduce the scattering problem into a bounded domain, and the well-posedness of the associated variational problem is examined. Subsequently, the PML method is employed to transform the problem from an unbounded domain to a bounded one. The corresponding TBCs are derived, and the well-posedness of the PML problem is established. Notably, exponential convergence is achieved between the PML solution and the solution to the original scattering problem.

This work is centered on formulating and analyzing the biharmonic wave scattering problem in one-dimensional periodic structures. Currently, we are developing numerical methods, including the finite element method, to solve the PML problem. The progress and results of this ongoing development will be detailed in a forthcoming publication.

\backmatter

%
%
%

\bmhead{Acknowledgments}

The first author is supported partially by National Natural Science Foundation of China (U21A20425) and a Key Laboratory of Zhejiang Province. The second author is supported in part by the NSF grant DMS-2208256. The third author is supported by the NSFC grants 12201245 and 12171017.

\section*{Declarations}
%
%
\begin{itemize}
\item Conflict of interest: On behalf of all authors, the corresponding author states that there is no conflict of 
interest.
\end{itemize}

\bibliography{sn-bibliography}

\end{document}